\newcommand{\fracs}[2]{\leavevmode\kern.1em\raise.5ex\hbox{\the\scriptfont0 #1}\kern-.2em/\kern-.15em\lower.25ex\hbox{\the\scriptfont0 #2}} 
\newcommand{\fracss}[2]{\leavevmode\kern.1em\raise.5ex\hbox{\the\scriptfont0 #1}\kern-.3em/\kern-.15em\lower.25ex\hbox{\the\scriptfont0 #2}} 
\renewcommand{\vec}[1]{{\bf #1}}
\newcommand{\nablauh}{\nabla_{\!h}}
\newcommand{\pizw}{\Pi_N(z,w)}
\newcommand{\bracketmatrix}[1]{\left[\begin{matrix}#1\end{matrix}\right]}
\newcommand{\dform}[2]{d#1_#2}
\newcommand{\del}{\partial}
\newcommand{\delb}{\bar\partial}
\newcommand{\dz}{dz}
\newcommand{\zb}{{\bar z}}
\newcommand{\w}{\wedge}
\newcommand{\wb}{\bar w}
\newcommand{\matpad}{\hbox to 1em{}}
\newcommand{\e}{\varepsilon}
\newcommand{\intr}{\int_{\mathbb{R}}}
\newcommand{\intc}{\int_{\mathbb{C}}}
\newcommand{\intct}{\int_{\mathbb{C}^2}}
\newcommand{\intcf}{\int_{\mathbb{C}^4}}
\newcommand{\intrf}{\int_{\mathbb{R}^4}}
\newcommand{\lime}{\lim_{\e_j\to0}}
\newcommand{\limep}{\lim_{\e'_{\!j}\to0}}
\newcommand{\h}[1]{h_{#1}}
\newcommand{\zz}[1]{\zeta_{#1}}
\newcommand{\ee}[1]{\e_{#1}}
\newcommand{\limes}{\lim_{\vec\e\to\vec0}}
\newcommand{\je}{J_{\vec\e}}
\newcommand{\thelor}{-\langle\Lambda^{-1}(0,r)v,v\rangle}
\newcommand{\thelt}{-\langle\Lambda^{-1}(t)v,v\rangle}
\newcommand{\etothelt}{e^{\thelt}}
\newcommand{\expy}[1]{\mathop{\rm exp}\nolimits\left[#1\right]}
\newcommand{\realpart}{\mathop{\rm Re}\nolimits}
\newcommand{\bigo}[1]{{O}\!\left(#1\right)}
\newcommand{\detildi}{{\det(i\Lambda D-I)}}
\renewcommand{\Im}{\mathop{\rm Im}\nolimits}
\renewcommand{\l}{\ell}
\newcommand{\half}{{\kern.1em\raise.5ex\hbox{\the\scriptfont0 1}\kern-.2em/\kern-.15em\lower.25ex\hbox{\the\scriptfont0 2}}}
\newcommand{\threehalves}{{\kern.1em\raise.5ex\hbox{\the\scriptfont0 3}\kern-.2em/\kern-.15em\lower.25ex\hbox{\the\scriptfont0 2}}}
\newcommand{\rbreak}{\par\medskip\par}
\newcommand{\honer}{h_{1,\mathbb{R}}}
\newcommand{\honei}{h_{1,\mathbb{I}}}
\newcommand{\xoner}{x_{1,\mathbb{R}}}
\newcommand{\xonei}{x_{1,\mathbb{I}}}
\newcommand{\htwor}{h_{2,\mathbb{R}}}
\newcommand{\htwoi}{h_{2,\mathbb{I}}}
\newcommand{\xtwor}{x_{2,\mathbb{R}}}
\newcommand{\xtwoi}{x_{2,\mathbb{I}}}
\newcommand{\dst}{ \bracketmatrix{d\honer\cr d\honei\cr d\xoner\cr d\xonei\cr d\htwor\cr d\htwoi\cr d\xtwor\cr d\xtwoi}{\kern-.4em\raise 11 ex\hbox{$\scriptstyle\top$}} }
\newcommand{\E}[1]{{{\rm E}\left[#1\right]}}
\renewcommand{\P}[1]{{\rm P}\!\left[#1\right]}
\renewcommand{\lll}[1]{\lambda_{#1}}
\newcommand{\f}[1]{f_{#1}}
\newcommand{\F}[1]{F_{#1}}
\newcommand{\st}{\hbox{ such that }}
\newcommand{\z}[1]{z_{#1}}
\newcommand{\lsz}[1]{|z_{#1}|^2}
\newcommand{\lsw}[1]{|w_{#1}|^2}
\newcommand{\tr}{{\mathop{\rm tr}\nolimits\,}}
\newcommand{\dzb}{d\bar z}
\newcommand{\tsw}[1]{|s^{#1j}z_j|^2}
\newcommand{\biggg}[1]{{\hbox{$\left#1\vbox to 23.5pt{}\right.\n@space$}}}
\newcommand{\bigggl}{\mathopen\biggg}
\newcommand{\bigggr}{\mathclose\biggg}
\newcommand{\eeehack}[2]{
e
  \kern-.5em
  \raise 3.5 ex\hbox{$\biggl($}
  \raise 2.5 ex
  \hbox{
    \kern-.5em
    \vbox{
      \baselineskip=0pt plus 0pt
      \lineskip=1pt
      \hbox{$\scriptstyle-\lll#1\tsw#1$}
      \hbox{$\scriptstyle-\lll#2\tsw#2$}
    }
    \kern-.25em
  }
  \raise 3.5 ex \hbox{$\biggr)$}
}
\newcommand{\xxx}[2]{x_{#1#2}}
\newcommand{\usw}[1]{\big|\sum_ju_{j#1}z_j\big|^2}
\newcommand{\usww}[1]{\big|\sum_ju_{j#1}{w_j\over\sqrt{\lll j}}\big|^2}
\newcommand{\uswww}[1]{\big|\sum_j{t^\frac52u_{j#1}\over\sqrt{\lll j}}\,w_j\big|^2}
\newcommand{\eeeehack}[1]{
\expandafter\ifcase\number#1
\or
e
  \kern-.5em
  \raise 5.5 ex\hbox{$\bigggl($}
  \raise 2.5 ex
  \hbox{
    \kern-.5em
    \vbox{
      \baselineskip=0pt plus 0pt
      \lineskip=1pt
      \hbox{$\scriptstyle-\lll1\lsz1$}
      \hbox{$\scriptstyle-\lll2\lsz2$}
      \hbox{$\scriptstyle-\lll3\lsz3$}
      \hbox{$\scriptstyle-\lll4\lsz4$}
    }
    \kern-.25em
  }
  \raise 5.5 ex \hbox{$\bigggr)$}
\or
e
  \kern-.5em
  \raise 4.25 ex\hbox{$\biggl($}
  \raise 2.5 ex
  \hbox{
    \kern-.5em
    \vbox{
      \baselineskip=0pt plus 0pt
      \lineskip=1pt
      \hbox{$\scriptstyle-\xxx12\zzb1\z2-\xxx13\zzb1\z3-\xxx14\zzb1\z4$}
      \hbox{$\scriptstyle\phantom{-\xxx12\zzb1\z2}-\xxx23\zzb2\z3-\xxx24\zzb2\z4$}
      \hbox{$\scriptstyle\phantom{-\xxx12\zzb1\z2-\xxx12\zzb1\z2}-\xxx34\zzb3\z4$}
    }
    \kern-.25em
  }
  \raise 4.25 ex \hbox{$\biggr)$}
\or
e
  \kern-.5em
  \raise 5.5 ex\hbox{$\bigggl($}
  \raise 2.5 ex
  \hbox{
    \kern-.5em
    \vbox{
      \baselineskip=0pt plus 0pt
      \lineskip=1pt
      \hbox{$\scriptstyle-|\sqrt{\lll1}z_1|^2$}
      \hbox{$\scriptstyle-|\sqrt{\lll2}z_2|^2$}
      \hbox{$\scriptstyle-|\sqrt{\lll3}z_3|^2$}
      \hbox{$\scriptstyle-|\sqrt{\lll4}z_4|^2$}
    }
    \kern-.25em
  }
  \raise 5.5 ex \hbox{$\bigggr)$}
\or
e^{-\angleenclose{w,w}}
\or
e
  \kern-.5em
  \raise 5.5 ex\hbox{$\bigggl($}
  \raise 2.5 ex
  \hbox{
    \kern-.5em
    \vbox{
      \baselineskip=0pt plus 0pt
      \lineskip=1pt
      \hbox{$\scriptstyle-x_1^2-y_1^2$}
      \hbox{$\scriptstyle-x_2^2-y_2^2$}
      \hbox{$\scriptstyle-x_3^2-y_3^2$}
      \hbox{$\scriptstyle-x_4^2-y_4^2$}
    }
    \kern-.25em
  }
  \raise 5.5 ex \hbox{$\bigggr)$}
\or 
  e^{-\sum_{j=1}^4\lambda_jz_j\bar z_j}
\or 
  e^{-\sum_{j=1}^4\lambda_j|z_j|^2}
\or 
  e^{-\sum_{j=1}^4|\sqrt{\lambda_j}z_j|^2}
\or 
  e^{\!-\!\!\sum_{j=1}^4\!\!|w_j|^2}
\fi
}
\newcommand{\zzb}[1]{\bar z_{#1}}
\newcommand{\uu}[2]{u_{#1#2}}
\newcommand{\A}[4]{A^{#2}_{#4}}
\newcommand{\B}[5]{B^{#2}_{#4 #5}}
\newcommand{\C}[6]{C^{#2\,#3}_{#5 #6}}
\newcommand{\spp}{\Pi_1^H(z,w)|_{(\zeta_p,\zeta_{p'})}}
\newcommand{\tay}{ \otimes }
\newcommand{\pinn}{\Pi(z,w)}
\newcommand{\ez}{e(z)}
\newcommand{\ebw}{\bar e(w)}
\newcommand{\ebz}{\bar e(z)}
\newcommand{\ezebw}{\ez\tay\ebw}
\newcommand{\orig}{\pinn\tay\ez\tay\ebw}
\newcommand{\gdzez}{\bigg[g(z)\,dz\tay\ez\bigg]}
\newcommand{\gdwbebw}{\bigg[\overline{g(w)}\,\dwb\tay\ebw\bigg]}
\newcommand{\negsmall}{{\hbox{($<0$ for small $t$ as $t\searrow0$)}}}
\newcommand{\ru}{r_U}
\newcommand{\tu}{\theta_U}
\newcommand{\bigoo}[1]{\bigo{t^{#1}}}
\newcommand{\sidey}{{t^{30}\over\pi^6\det A(t)\det\Lambda(t)}}
\newcommand{\sideshort}{{t^{30}+\bigoo{33}\over\pi^6\det A(t)\det\Lambda(t)}}
\newcommand{\rr}{r_R}
\newcommand{\trr}{\theta_R}
\newcommand{\boxform}[1]{\lower1ex\hbox{\boxit{\hbox{$#1$}}}}
\newcommand{\boxformdisp}[1]{\lower1.5ex\hbox{\boxit{\hbox{$#1$}}}}
\newcommand{\aaa}{{8666449635704832000000}}
\newcommand{\bbb}{{644972544000}}
\newcommand{\ccc}{{8640\sqrt3}}
\newcommand{\ddd}{{27993600}}
\newcommand{\eee}{{161243136000}}
\newcommand{\underx}{\kern-5.2em\lower.5ex\hbox{$\underbrace{\hskip5em}_x$}}
\newcommand{\oversqrtx}{\kern-10.5em\raise4ex\hbox{$\braceld\leaders\vrule\hskip7em\braceru\bracelu\leaders\vrule\hskip.5em\bracerd$}\kern-2.7em\smash{\raise5.5ex\hbox{$\scriptstyle\sqrt{1+x}$}}}
\newcommand{\overgeo}{\kern-9.5em\raise2.5ex\hbox{$\braceld\leaders\vrule\hskip6.2em\braceru\bracelu\leaders\vrule\hskip.8em\bracerd$}\kern-2.5em\smash{\raise4.8ex\hbox{${1\over1+x}$}}\hskip1.5em}
\newcommand{\underxspec}[4]{\kern-#1em\lower#2ex\hbox{$\underbrace{\hskip#3em}_x$}\hskip#4em}
\newcommand{\underarctan}[4]{\kern-#1em\lower#2ex\hbox{$\underbrace{\hskip#3em}_{\arctan x}$}\hskip#4em}
\newcommand{\undercos}[4]{\kern-#1em\lower#2ex\hbox{$\underbrace{\hskip#3em}_{\cos x}$}\hskip#4em}
\newcommand{\overgeospec}[7]{\kern#1em\raise#2ex\hbox{$\braceld\leaders\vrule\hskip#3em\braceru\bracelu\leaders\vrule\hskip#4em\bracerd$}\kern-#5em\smash{\raise#6ex\hbox{${1\over1+x}$}}\hskip#7em}
\newcommand{\undersqrt}[4]{\kern-#1em\lower#2ex\hbox{$\underbrace{\hskip#3em}_{\sqrt{1+x}}$}\hskip#4em}
\newcommand{\elem}[1]{e_#1(\lll1,\lll2,\lll3,\lll4)}
\newcommand{\lefty}{{1\over\pi^6\det A(t)\det\Lambda(t)}}
\newcommand{\leftyyy}{{t^{30}+\bigoo{33}\over\scriptstyle\pi^6\!\det\!A(t)\!\det\!\Lambda(t)\!\prod\!\lll i}}
\newcommand{\bi}[2]{{{O}(t^{\frac#1#2})}}
\newcommand{\omat}[1]{{
\bracketmatrix{
{\cal O}\left(t^{#1}\right)&0&0&0 \cr
0&{\cal O}\left(t^{#1}\right)&0&0 \cr
0&0&{\cal O}\left(t^{#1}\right)&0 \cr
0&0&0&{\cal O}\left(t^{#1}\right)
}
}}
\newcommand{\allomat}[1]{{
\bracketmatrix{
{\cal O}\!\left(t^{#1}\right)&{\cal O}\!\left(t^{#1}\right)&{\cal O}\!\left(t^{#1}\right)&{\cal O}\!\left(t^{#1}\right)\cr
{\cal O}\!\left(t^{#1}\right)&{\cal O}\!\left(t^{#1}\right)&{\cal O}\!\left(t^{#1}\right)&{\cal O}\!\left(t^{#1}\right)\cr
{\cal O}\!\left(t^{#1}\right)&{\cal O}\!\left(t^{#1}\right)&{\cal O}\!\left(t^{#1}\right)&{\cal O}\!\left(t^{#1}\right)\cr
{\cal O}\!\left(t^{#1}\right)&{\cal O}\!\left(t^{#1}\right)&{\cal O}\!\left(t^{#1}\right)&{\cal O}\!\left(t^{#1}\right)\cr
}
}}
\newcommand{\lt}[1]{\tilde\lambda_{#1}}
\newcommand{\genm}[3]{{{}^{#1}\!M_{{#2}{#3}}(t)}}
\newcommand{\genmr}[3]{{{}^{#1}\!M_{{#2}{#3}}(r)}}
\newcommand{\yy}[2]{{Y_{{#1}{#2}}}(t)}
\newcommand{\oaf}[1]{\big[\bigoo{#1}\!\big]_{4\times4}}
\newcommand{\daf}[1]{\mathop{\big[\bigoo{#1}\!\big]_{4\times4}}_{\kern-1.3em{\rm (diag)}}}
\newcommand{\ut}{\tilde U(t)}
\newcommand{\al}[1]{{\alpha_{#1}}}
\newcommand{\be}[1]{{\beta_{#1}}}
\newcommand{\ga}[1]{{\gamma_{#1}}}
\newcommand{\de}[1]{{\delta_{#1}}}
\newcommand{\doc}{paper}
\newcommand{\crit}{\mathop{\rm Crit}^{\nablauh}(s)}
\newcommand{\Crit}{\crit}
\renewcommand{\phi}{\varphi}
\newcommand{\knnk}{{K_{nk}^N}}
\newcommand{\knkm}{{K_{nkm}^\infty}}
\newcommand{\sume}[1]{{\sum_{jk}\!{}^{#1}\!\varepsilon_{jk}(t)w_j\wb_k}}
\newcommand{\sumew}[1]{{\sum_{jk}\!{}^{#1}\!\varepsilon_{jk\ell m}(t)w_j\wb_kw_{\ell}\wb_m}}
\renewcommand{\SS}[1]{\Sigma_{#1}(w,t)}
\newcommand{\eps}[3]{{}^{#1}\!\varepsilon_{#2#3}(t)}
\newcommand{\edub}{\,e^{-\angleenclose{w,w}}\,dw}
\newcommand{\pound}{\mathop{\char "23}\nolimits}
\newcommand{\Vol}[1]{{\mathop{\rm Vol}\nolimits\left[#1\right]}}
\newcommand{\jpd}{\mathop{\rm JPD}}
\newcommand{\kinf}[3]{{K_{#1#2#3}^\infty}}
\newcommand{\kN}[3]{{K_{#1#2#3}^N}}
\newcommand{\kn}[2]{{K_{#1#2}^N}}
\newcommand{\npower}{{{}^2}}
\newcommand{\etothelor}{e^{\thelor}}
\newcommand{\dist}{\mathop{\rm dist}}
\newcommand{\tm}{T_M}
\newcommand{\tmholo}{T'_M}
\newcommand{\tmantiholo}{T''_M}
\newcommand{\tsm}{T^*_M}
\newcommand{\tsmholo}{T^{*\prime}_M}
\newcommand{\tsmantiholo}{T^{*\prime\prime}_M}
\newcommand{\tpm}{\bigwedge^p T^*_M}
\newcommand{\tpqsm}{T^{*(p,q)}_M}
\newcommand{\ddzone}{{\partial\,\over\partial z_1}}
\newcommand{\ddzn}{{\partial\,\over\partial z_n}}
\newcommand{\ddzbone}{{\partial\,\over\partial \bar z_1}}
\newcommand{\ddzbn}{{\partial\,\over\partial \bar z_n}}
\newcommand{\spa}{\mathop{\rm span}}
\newcommand{\forms}[1]{{\cal A}^{#1}(L)}
\newcommand{\ot}{\otimes}
\newcommand{\mul}{{\cal L}}
\newcommand{\volm}{{\rm Vol}_M}
\newcommand{\pih}{\Pi_1^{\bf H}}
\newcommand{\pit}{\tilde\Pi_N}
\newcommand{\nablaHz}{\nabla_{\!z}^{\bf H}}
\newcommand{\nablaHzh}{\left(\nabla_z^{\bf H}\right)'}
\newcommand{\nabHzh}{\nablaHzh}
\newcommand{\nabHwy}{\left(\nabla_w^{\bf H}\right)''}
\newcommand{\sppppp}{{\pih((z,0),(w,0))}|_{(\zeta_p,\zeta_{p'})}}
\newcommand{\spppppp}{{\Pi_N\left(\zovern,\wovern\right)}|_{(\zeta_p,\zeta_{p'})}}
\newcommand{\zovern}{{z\over\sqrt N}}
\newcommand{\wovern}{{w\over\sqrt N}}
\newcommand{\nablahn}{{\nabla_{\!\!h^N}}}
\newcommand{\ddzop}{\ddz{\phantom x}}
\newcommand{\lamH}{\Lambda^{\bf H}}
\newcommand{\deltaH}{\Delta^{\bf H}}
\newcommand{\hred}{{\bf H}_{\rm red}}
\newcommand{\angleenclose}[1]{\langle#1\rangle}
\newcommand{\ddz}[1]{{\del #1\over\del z}}
\newcommand{\ddzjf}{{\del f\over\del z_j}}
\newcommand{\ddzbjf}{{\del f\over\del \bar z_j}}
\newcommand{\ddzdz}[1]{{\del^2 #1\over\del z^2}}
\newcommand{\ddwbdwb}[1]{{\del^2 #1\over\del\bar w^2}}
\newcommand{\ddzdwbdwb}[1]{{\del^3 #1\over\del z\del\bar w^2}}
\newcommand{\ddzdwb}[1]{{\del^2 #1\over\del z\del\bar w}}
\newcommand{\dwb}{d\bar w}
\newcommand{\ddwb}[1]{{\del #1\over\del\bar w}}
\newcommand{\nabz}{\nabla_z}
\newcommand{\nabh}{\nabla'}
\newcommand{\naby}{\nabla''}
\newcommand{\nabhz}{\nabla_{\!\!z}'}
\newcommand{\nabyz}{\nabla_{\!\!z}''}
\newcommand{\nabhw}{\nabla_w'}
\newcommand{\nabyw}{\nabla_{\!\!w}''}
\newcommand{\tts}{\angleenclose{\;\,,\;}}
\newcommand{\surj}{\twoheadrightarrow}
\newcommand{\iso}{\cong}
\newcommand{\clap}[1]{\hbox to 0pt{\hss#1\hss}}
\newcommand{\mathrlap}{\mathpalette\mathrlapinternal}
\newcommand{\mathrlapinternal}[2]{ \rlap{$\mathsurround=0pt#1{#2}$}}
\newcommand{\as}[2]{\mathbin{{\displaystyle\mathop{\hbox to #2pt{$\mathord-\mkern-6mu\cleaders\hbox{$\mkern-2mu\mathord-\mkern-2mu$}\hfill\mkern-6mu\mathord\rightarrow$}}^{#1}}}}
\newcommand{\brac}[2]{{
{
   t^\frac52 u_{#1 #2}
    \over
   \sqrt{\lll{#1}}
   }
}}
\def\mn{{{\the\mc}}}
\def\an{{{\the\ac}}}
\journalname{}
\begin{document}

\title{Scaled Correlations of Critical Points of Random Sections on Riemann Surfaces}

\titlerunning{Scaled Correlations}        

\author{John Baber}

\institute{John Baber\at
              University of Connecticut\\
              Department of Mathematics\\
              196 Auditorium Rd.\\
              Storrs, CT 06269\\
              Tel.: +1-860-486-1288\\
              \email{baber@math.uconn.edu}
}

\date{Received: date / Accepted: date}

\maketitle

\begin{abstract}
In this paper we prove that as $N$ goes to infinity, the scaling limit of the correlation between critical points $z_1$ and $z_2$ of random holomorphic sections of the $N$-th power of a positive line bundle over a compact Riemann surface tends to $2/(3\pi^2)$ for small $\sqrt{N}|z_1-z_2|$. The scaling limit is directly calculated using a general form of the Kac-Rice formula and formulas and theorems of Pavel Bleher, Bernard Shiffman, and Steve Zelditch. 
\keywords{Several complex variables \and Random sections}
\end{abstract}

\section{Introduction}
\label{intro}

This \doc\ studies the behavior of the critical points of gaussian random holomorphic sections of the $N$-th power of a holomorphic line bundle $L$ on a Riemann surface $M$ as $N\to\infty$, as is studied in \cite{cpsv1}, \cite{cpsv2}, and \cite{cpsv3}.  In the particular case where $L={\cal O}(1)$, the so-called hyperplane section bundle over $M=\mathbb{C}\mathbb{P}^1$, sections of $L^N$ correspond to homogeneous polynomials of degree $N$, the $SU_2$ polynomials, so the results in this \doc\ apply to the critical points of random polynomials $\sum\sqrt{n\choose k}c_kz^k$ with $c_k$ identically distributed gaussian random variables.  In this way, this \doc\ examines one small facet of the theory of random polynomials and random holomorphic functions.

Since what may have been the first study of critical points of random curves in \cite{riceold}, this area of research has led to results of interest in mathematics, probability theory, and physics.  For instance, the classical result of Hammersley in \cite{hammersley} that for $f(z):=\sum_{j=0}^Nc_jz^j$ with $c_j$ independent standard gaussian random variables, as $N\to\infty$, the complex zeroes tend toward the unit circle in $\mathbb{C}$ and its generalization by Bloom and Shiffman in \cite{bs} (also discussed in \cite{bloom}), namely that as $N\to\infty$, the common zeroes of $m$ random polynomials $f_k(z):=\sum_{|J|\leq N}c_J^kz_1^{j_1}\cdots z_m^{j_m}$ in $\mathbb{C}^m$ are concentrated near the ``distinguished boundary'' of the $m$-dimensional polydisc.  Since the zeroes of a collection of $m$ polynomials in $m$ variables is almost surely discrete, for random $f_i$, the set $\{f_1(z)=f_2(z)=\dots=f_m(z)=0\}$ is a random point process on $\mathbb{C}^m$ of interest in probability theory.

How much should zeroes and critical points of random polynomials or random holomorphic functions $\sum_{k=0}^\infty{a_k\over k!}z^k$ be expected to vary from their expected behavior?  This type of question is addressed in \cite{sodin4}, \cite{sodin1}, \cite{sodin2}, \cite{sodin3}.  This \doc\ examines how pairs of critical points are correlated by examining the 2-point correlation function, $K_2(z,w)$.

The main theorem of this \doc\ says that the scaling limit of the correlation between critical points of random holomorphic sections of the $N$-th power of a positive line bundle over a compact Riemann surface tends to ${2\over3\pi^2}$ as $N\to\infty$ for small $r:=\sqrt N|\zeta_1-\zeta_2|$. i.e.
\begin{theorem}
\label{main}
For any positive hermitian line bundle $L$ over any compact Riemann surface $M$
\begin{equation}
\label{eq:question2}
\lim_{N\to\infty}{1\over N\npower}\kN211\left({\zeta_1\over\sqrt N},{\zeta_2\over\sqrt N}\right)={2\over3\pi^2}+\bigo{r^2} 
\end{equation}
where $ r:=\dist(\zeta_1,\zeta_2) $ i.e. as the distance between critical points gets smaller, their scaled limit correlation approaches $\displaystyle{2\over3\pi^2}$ uniformly in $\zeta_1,\zeta_2$.
\end{theorem}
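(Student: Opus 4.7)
The plan is to apply a general Kac-Rice formula to the gaussian random field $\nablauh s_N$, whose zero set coincides with the critical set of the random section $s_N$, and then to evaluate the large-$N$ scaling limit term by term using the near-diagonal asymptotics of the Szeg\H{o} kernel $\Pi_N$ due to Bleher, Shiffman, and Zelditch. In suitably chosen normal coordinates on $M$ together with a preferred local frame of $L$, the scaled Szeg\H{o} kernel $N^{-1}\Pi_N(z/\sqrt N,w/\sqrt N)$ and all its mixed derivatives up to second order converge to those of the Bargmann--Fock reproducing kernel, uniformly on compact sets in $z,w$. Consequently the joint covariance of $(\nablauh s_N,\,\nabla\nablauh s_N)$ at the scaled points $\zeta_1/\sqrt N$ and $\zeta_2/\sqrt N$ admits a universal limit depending only on $r=|\zeta_1-\zeta_2|$, which is exactly the robustness needed to make the limit hold uniformly in $\zeta_1,\zeta_2$ over the compact surface $M$.

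The Kac-Rice formula expresses $\kN211(\zeta_1,\zeta_2)$ as a product of (i) the joint density of $\nablauh s_N(\zeta_1)$ and $\nablauh s_N(\zeta_2)$ evaluated at the origin, and (ii) the conditional expectation of $|\det\nabla\nablauh s_N(\zeta_1)|\cdot|\det\nabla\nablauh s_N(\zeta_2)|$ given that vanishing. Since $\dimc M = 1$, each first derivative is a single complex number and each Hessian is a single complex number; the determinants reduce to complex absolute values, and the conditional expectation is a gaussian integral over just two complex variables. Concretely, I would assemble the $2\times 2$ complex covariance matrix $\Lambda(r)$ of the first derivatives from $\Pi_N$ and its mixed $\partial\bar\partial$ derivatives, extend this to the full $4\times 4$ joint covariance matrix including Hessians, and then perform Schur-complement conditioning to extract the scaling-limit conditional density.

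The resulting expression for $N^{-2}\kN211$ is an explicit gaussian integral of $|h_1|\,|h_2|$ against a conditioned centered complex gaussian, a classical calculation whose value is a rational-plus-elementary function of the conditional covariance entries. What remains is a small-$r$ Taylor expansion, and here lies the main obstacle: the joint density of $(\nablauh s_N(\zeta_1),\nablauh s_N(\zeta_2))$ at the origin \emph{blows up} as $r\to 0$, because the two first derivatives become collinear, while the conditional variance of the Hessians \emph{degenerates} to zero simultaneously. Both effects are of the same order in $r$, and the finite limit $2/(3\pi^2)$ emerges only from the exact cancellation of the leading powers; the $\bigo{r^2}$ correction (rather than an $O(r)$ term) is forced by the evenness of the universal covariance in $r$. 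In practice I expect to need power-series expansions of several $4\times 4$ block determinants and matrix inverses to sufficiently high order to expose this cancellation cleanly and to read off the coefficient $2/(3\pi^2)$, which by construction is a universal Bargmann--Fock number independent of $M$ and $L$.
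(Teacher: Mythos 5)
Your high-level strategy matches the paper's: apply a Kac--Rice formula to $\nablauh s_N$, pass to the Bargmann--Fock/Heisenberg scaling limit of $\Pi_N$ via the Bleher--Shiffman--Zelditch theorems, choose $z=0$, $w=r$, and extract the small-$r$ behavior from the conditional covariance. However, there is a structural error in your count of degrees of freedom that hides the entire difficulty of the problem. You assert that on a Riemann surface ``each Hessian is a single complex number; the determinants reduce to complex absolute values, and the conditional expectation is a gaussian integral over just two complex variables,'' which would indeed make the remaining integral a classical computation of $\E{|h_1||h_2|}$. But the section $\nablauh s$ is \emph{not} holomorphic even though $s$ is, so the covariant Hessian at each point has two independent complex entries, $\xi_1=\nabhz\nablauh s$ and $\xi_2=\nabyz\nablauh s$, and the ``determinant'' factor in Kac--Rice is $\bigl||\xi_1|^2-|\xi_2|^2\bigr|$, not $|\xi_1|$. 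Consequently the full joint covariance matrix is $6\times6$ (not $4\times4$), the Schur complement $\Lambda(r)$ is $4\times4$ (not $2\times2$), and the conditional gaussian integral runs over $\mathbb{C}^4$ with the integrand $\bigl||\xi_1|^2-|\xi_2|^2\bigr|\cdot\bigl||\xi_3|^2-|\xi_4|^2\bigr|$.

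That integrand is precisely what makes this \emph{not} a ``classical calculation whose value is a rational-plus-elementary function of the conditional covariance entries.'' The paper explicitly records that the absolute values block the Wick-formula approach of \cite{bsz}, the non-holomorphicity of $\nablauh s$ blocks the Poincar\'e--Lelong approach of \cite{pl}, and the Fourier/residue trick of \cite{cpsv2} fails because the relevant determinant is too complicated a function of $r$ and $e^{r^2}$. The actual proof replaces $\Lambda^{-1}(t)=t^{-5}Y(t)$ (with $t=r^2$), diagonalizes $Y(t)=U(t)^*D(t)U(t)$ by hand-expanding the quartic formula for the eigenvalues, finds that they scale like $8640$, $6t^3$, $\tfrac13 t^4$, and $\tfrac38 t^5$, rescales so that the gaussian becomes standard, and then uses a dedicated lemma to peel error terms out of the absolute value, leaving a leading term $-t\intcf\alpha_4(w)\,e^{-\langle w,w\rangle}\,dw = 2\pi^4 t$; combining with $\det A(t)\sim 3t$ gives $2/(3\pi^2)+\bigo t = 2/(3\pi^2)+\bigo{r^2}$. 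Your observation that the $\bigo{r^2}$ (rather than $O(r)$) correction is forced because the universal covariance depends only on $r^2$ is correct and is exactly why the paper works throughout in the variable $t=r^2$. But the main cancellation you gesture at between a blowing-up density and a degenerating conditional variance is not visible from generalities: it requires the careful eigenvalue bookkeeping above, which your proposal has no mechanism to replace.
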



$\kN211(z,w)$ is calculated via the generalized form of the Kac-Rice formula of \cite{kac},\cite{rice}
\begin{equation}
K(t)=\int|\xi|\jpd(0,\xi;\,t)\,d\xi
\end{equation}
where $\jpd(x,\xi;\,t)$ denotes the joint probability distribution of $x=f(t)$ and $\xi=f'(t)$.

Though we know no immediate interpretation of the constant ${2\over3\pi^2}$, the fact that it is not 0 is interesting.  This contrasts with the fact that the scaling limit correlation of \it zeroes\rm\ of random sections on a compact Riemann surface is $\bigo{r^2}$ as was proved in general in \cite{pl} and \cite{bsz} and specfically for $\mathbb{C}\mathbb{P}^1$ in \cite{hannay}.

This paper is based on the thesis submitted to the Department of Mathematics at Johns Hopkins University in 2010 which was read by Bernard Shiffman (Advisor) and Steve Zelditch.

The introductions of \cite{cpsv1}, \cite{cpsv2}, and \cite{bsz} give a description of the basic objects of study and the physical motivation for them.  The next few sections summarize the more thorough descriptions given there.

\section{Notation and Formulas}

Throughout these definitions, $M$ will denote a complex manifold of complex dimension $n$ with complex coordinates $(z_1,\ldots,z_n)$.  $M$ will also be thought of as a $2n$-dimensional real manifold with coordinates $(x_1,y_1,\ldots,x_n,y_n)$ where $z_j=:x_j+iy_j$.  $\mul$ denotes Lebesgue measure on $\mathbb{C}$ and ${\cal B}$ denotes the borel subsets of $\mathbb{C}$.  In general $L$ will be a holomorphic line bundle over $M$.  For standard results and definitions about line bundles, see Chapter 1 of \cite{gh} for instance.

We begin by summarizing our notation. $\tm$ denotes the set of smooth complex-valued vectors on $M$. i.e. $\tm\!_{,p}$ is the space of $\mathbb{C}$-linear derivations in the ring of complex-valued ${\cal C}^\infty$ functions on $M$ near $p$.
\begin{equation}\tm=\tmholo\oplus\tmantiholo\end{equation}
where
\begin{align}
  \tmholo&:=\spa\left(\ddzone,\ldots,\ddzn\right)\qquad\hbox{the ``holomorphic'' tangent space} \\
  \tmantiholo&:=\spa\left(\ddzbone,\ldots,\ddzbn\right)\qquad\hbox{the ``antiholomorphic'' tangent space}
\end{align}
$\tsm$ denotes the dual space of $\tm$, i.e. the set of smooth complex-valued covectors or 1-forms on $M$.  $$\tsm=\tsmholo\oplus\tsmantiholo$$ where
\begin{align}
  \tsmholo&:=\spa\left(dz_1,\ldots,dz_n\right)\qquad\hbox{the ``holomorphic'' cotangent space} \\
  \tsmantiholo&:=\spa\left(\dzb_1,\ldots,\dzb_n\right)\qquad\hbox{the ``antiholomorphic'' cotangent space}
\end{align}
Recall that
\begin{equation}
d=\del+\delb
\end{equation}
where
\begin{equation}
df=\underbrace{\sum_{j=1}^n\ddzjf\,dz_j}_{\del f}+\underbrace{\sum_{j=1}^n\ddzbjf\,\dzb_j}_{\delb f}
\end{equation}

We let ${\cal A}^p(L)$ denote the sheaf of smooth $L$-valued $p$-forms.  i.e. for any open $U\subset M$, ${\cal A}^p(L)(U):=$\hfill
\begin{equation}
\{\left.\strut\omega\right|_x\otimes e_U(x)\;\mid\;\omega\in \left.\strut \tpm\right|_U\hbox{and $e_U$ a local frame above $U$}\}.
\end{equation}
We also let ${\cal A}^{p,q}(L)$ denotes the sheaf of smooth $L$-valued $(p,q)$-forms.  i.e. for any open $U\subset M$, ${\cal A}^{p,q}(L)(U):=$\hfill
\begin{equation}\{\left.\strut\omega\right|_x\otimes e_U(x)\;\mid\;\omega\in \left.\strut \tpqsm\right|_U\hbox{and $e_U$ a local frame above $U$}\}\end{equation}

The $\delb$ operator is extended to act on sections via
\[
\begin{matrix}
  \delb:&\forms{p,q}&\longrightarrow&\forms{p,q+1} \\
  &\omega\ot e&\longmapsto&\delb\omega\ot e
\end{matrix}
\]

Using the $\tsm=\tsmholo\oplus\tsmantiholo$ decomposition, we write $\nabla=\nabla'+\nabla''$ for any connection $\nabla$ where
\begin{align}
  \nabla':\forms0\to\forms{1,0} \\
  \nabla'':\forms0\to\forms{0,1}
\end{align}

Given a hermitian line bundle $(L,\tts_h)\to M$, the Chern connection associated to $\tts_h$, will be written $\nablauh$.
When the $h$ is obvious, $\nablauh$ and $\tts_h$ will just be written $\nabla$ and $\tts$.

For given $N$, we will choose local coordinates on $U$ and a local frame $e^N_U$ for $L^N$ over $U\subset M$ such that $h(z)=1-|z|^2+\bigo{|z|^3}$ by taking an arbitrary frame and multiplying by a smooth function with appropriate first and second order terms.  Then
\begin{equation}
\nablahn=d+N\del\log h
\end{equation}
and
\begin{align}
\ddzop\log h&=\ddzop\log(1-|z|^2+\bigo{|z|^3}) \notag \\
&={1\over1-|z|^2+\bigo{|z|^3}}(-\bar z+\bigo{|z|^2}) \\
&=(1+\bigo{|z|^2})(-\bar z+\bigo{|z|^2}) \notag \\
&=-\bar z+\bigo{|z|^2} \notag
\end{align}
Also
\begin{equation}
\nabla_h''=\delb
\end{equation}

The curvature form for $\nablauh$ will be written $\Theta_h$.  Note that because $L$ is a \it line\rm\ bundle, $\Theta_h$ is just the $1\times1$ matrix $\bracketmatrix{\delb\theta}=\bracketmatrix{\delb\del\log h}$.

We now summarize notation from probability theory.  For a random variable 
\begin{equation}
X:(\Omega,\Sigma,P)\to(\mathbb{R},{\rm BorelSets},d{\rm Lebesgue})
\end{equation}
we'll write the cumulative distribution function of $X$ as
\begin{equation}
F_X(t):=\P{X^{-1}((-\infty,t])}
\end{equation}
When $X\in L^1(P)$ we denote the expected value of $X$ by
\begin{equation}
\E{X}:=\int_\Omega X\,dP
\end{equation}
Note when $X$ happens to have a probability density function $f_X$,
\begin{equation}
\E{X}=\intr tf_X(t)\,dt
\end{equation}

\begin{definition}\label{cgauss}A \it centered complex gaussian\rm\ random variable is a random variable
\begin{equation}
X:(\Omega,\Sigma,P)\to(\mathbb{C},{\cal B},\mul)
\end{equation}
whose distribution is 
\begin{align}
(X_*P)(B):=\P{X^{-1}(B)}=\int_B{1\over\pi\sigma^2}e^{-{1\over\sigma^2}|z|^2}\,d\mul(z)
\end{align}
When $\sigma=1$ we say $X$ is a \it standard complex gaussian\rm.
\end{definition}

Note any centered gaussian has expected value 0:
\begin{equation}
\int_\Omega X\,dP=\intc{z\over\pi\sigma^2}e^{-{1\over\sigma^2}|z|^2}\,d\mul(z)=0
\end{equation}

\begin{definition}\label{jointgauss}More generally a collection of random variables $X_j:\Omega\to\mathbb{C}$ is said to be \it jointly gaussian\rm\ if the complex valued random variable
\begin{equation}
a_1X_1+\cdots+a_nX_n
\end{equation}
is a centered complex gaussian for any $a_j\in\mathbb{C}$.
\end{definition}

\begin{definition}\label{covmatrix}  The $n\times n$ symmetric positive semi-definite matrix
\begin{equation}
\Delta:=\bracketmatrix{\E{X_i\bar X_j}}_{i,j=1\ldots n}
\end{equation}
is called the \it covariance matrix\rm\ of $\vec X$.  When the $X_i$ are linearly independent, as in our calculation, $\Delta$ is non-singular, i.e. \it positive definite\rm.
\end{definition}

When the $X_i$ are linearly independent, Definition \ref{jointgauss} is equivalent to a more probability density style description.  Specifically, Definition \ref{jointgauss} in this case is equivalent to demanding that the random vector
\begin{equation}
\bracketmatrix{X_1\cr\vdots{}\cr\strut X_n}:\Omega^n\to\mathbb{C}^n
\end{equation}
has distribution
\begin{equation}
(\vec X_*P)(B)=\P{\vec X^{-1}(B)}=\int_B{1\over\pi^n\det\Delta}e^{-\angleenclose{\Delta^{-1}z,z}}\,dz
\end{equation}
where
\begin{equation}
dz=d\mul(z_1)\w\ldots\w d\mul(z_n)
\end{equation}

The distribution $\vec X_*P$ for any $P$ as above is called the \it joint probability distribution\rm\ of the $X_j$.

\begin{lemma}\label{jointgausspreserve} If $X_1,\ldots,X_n$ are jointly gaussian then the entries of $L(\vec X)$ are, too, for any linear surjection $L:\mathbb{C}^n\surj\mathbb{C}^m$
\end{lemma}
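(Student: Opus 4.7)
The plan is to unwind Definition \ref{jointgauss} directly: to show that the components of $Y := L(\vec X)$ are jointly gaussian, it suffices to check that every complex linear combination $b_1Y_1+\cdots+b_mY_m$ is a centered complex gaussian in the sense of Definition \ref{cgauss}. So the entire argument is essentially a tautological rearrangement, with a small check at the end to cash in the surjectivity hypothesis.

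First I would fix a matrix representation of $L$, writing $L=[L_{ij}]$ so that $Y_i=\sum_{j=1}^n L_{ij}X_j$. Then for an arbitrary $\vec b=(b_1,\ldots,b_m)\in\mathbb{C}^m$, I would compute
\begin{equation}
\sum_{i=1}^m b_i Y_i \;=\; \sum_{i=1}^m b_i\sum_{j=1}^n L_{ij}X_j \;=\; \sum_{j=1}^n c_j X_j,\qquad c_j:=\sum_{i=1}^m b_i L_{ij}.
\end{equation}
Since the right-hand side is a complex linear combination of $X_1,\ldots,X_n$, the joint gaussianity of the $X_j$ (Definition \ref{jointgauss}) says it is a centered complex gaussian, which by Definition \ref{jointgauss} applied to $Y_1,\ldots,Y_m$ is exactly what needs to be verified.

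The one place the surjectivity of $L$ enters is in ensuring that, when the original $X_j$ are linearly independent, so are the $Y_i$; this is the hypothesis needed elsewhere in the paper to use the non-singular density form of the joint distribution from Definition \ref{covmatrix}. I would verify this by noting that if $\sum_{i}b_iY_i=0$ almost surely, then by linear independence of the $X_j$ each coefficient $c_j=\sum_i b_i L_{ij}$ vanishes, i.e. $L^{\!\top}\vec b=\vec 0$; since $L$ is surjective, $L^{\!\top}$ is injective, forcing $\vec b=\vec 0$.

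There is no serious obstacle here: the entire content is the associativity of matrix multiplication combined with the fact that ``jointly gaussian'' is phrased in terms of arbitrary linear combinations, which already closes the class under linear maps. The only thing one has to be careful about is keeping track of the distinction between joint gaussianity (which requires only linearity) and non-degeneracy of the covariance matrix (which requires surjectivity).
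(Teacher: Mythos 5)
Your argument is essentially identical to the paper's: write $Y_i=\sum_j L_{ij}X_j$, expand an arbitrary linear combination $\sum_i b_iY_i$ into a linear combination $\sum_j c_jX_j$ of the original variables, and invoke Definition \ref{jointgauss}. The paper's proof stops there and never actually uses the surjectivity hypothesis; your closing remark—that surjectivity of $L$ makes $L^{\!\top}$ injective, so independence of the $X_j$ forces independence of the $Y_i$, keeping the covariance matrix non-singular—is a correct and useful addition that explains why the hypothesis is stated, even though the paper leaves it implicit.
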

\begin{proof}
If $\bracketmatrix L=\bracketmatrix{\l_{ij}}$ and
\begin{equation}
L\left(\bracketmatrix{X_1\cr\vdots\cr X_n}\right)=\bracketmatrix{Y_1\cr\vdots\cr Y_m}
\end{equation}
then $a_1Y_1+\cdots+a_mY_m=$\begin{align*}
  &a_1(\l_{11}X_1+\cdots+\l_{1n}X_n)+\cdots+a_m(\l_{m1}X_1+\cdots+\l_{mn}X_n) \\
  =&(a_1\l_{11}+\cdots+a_m\l_{m1})X_1+\cdots+(a_1\l_{1n}+\cdots+a_m\l_{mn})X_n
\end{align*}
a centered complex gaussian for any $a_j\in\mathbb{C}$.\hfill\qed
\end{proof}

\section{Random Sections and the Two Point Kernel}

Here we define what we mean by ``random sections'' of the bundle $L^N\to M$.
\begin{definition}\label{htothen} The metric $h$ induces hermitian metrics $h^N$ on $L^N$ given by $h^N(z):=h(z)^N$ i.e.
$\angleenclose{s_1\ot\cdots\ot s_N,t_1\ot\cdots\ot t_N}_{L^N}=$
\begin{align*}
  &\angleenclose{f_1e_U\ot\cdots\ot f_Ne_U,g_1e_U\ot\cdots\ot g_Ne_U}_{L^N} \cr
=&f_1\cdots f_N\bar g_1\cdots\bar g_N\angleenclose{e_U\ot\cdots\ot e_U,e_U\ot\cdots\ot e_U}_{L^N} \cr
:=&f_1\cdots f_N\bar g_1\cdots\bar g_N\angleenclose{e_U,e_U}_L^N=f_1\cdots f_N\bar g_1\cdots\bar g_N h(z)^N
\end{align*}
\end{definition}

\begin{definition}\label{weirdip} Using $h^N$ we can create a new hermitian inner-product on $H^0(M,L^N)$ by
\begin{equation}
\angleenclose{s,t}=\int_M\angleenclose{s,t}_{L^N}\,d\volm\qquad s,t\in H^0(M,L^N)
\end{equation}
\end{definition}

Throughout the rest of this section, $(s_j^N)$ will denote an orthonormal basis for $H^0(M,L^N)$ with respect to $\tts$.

\begin{definition}\label{randomsection}
We can define a gaussian probability measure $P$ on\break $H^0(M,L^N)$.  Given
\begin{equation}
H^0(M,L^N)\ni s=\sum_{j=1}^\l c_j(s)\,s_j^N
\end{equation}
for any borel collection of sections ${\cal S}$,
\begin{equation}
\P{{\cal S}}:=\int_{\cal S}{1\over\pi^n}e^{-\angleenclose{c_j(s),c_j(s)}}\,dc(s)
\end{equation}
where $dc(s)$ is $2\l$-dimensional Lebesgue measure.
\end{definition}

$P$ is characterized by the property that the $2\l$ real variables $\realpart(c_j)$ and $\Im(c_j)$ are independent random variables with mean 0 and variance $\fracs12$.  Specifically
\begin{equation}
\E{c_j}=0\qquad\E{c_jc_k}=0\qquad\E{c_j\bar c_k}=\delta_{jk}
\end{equation}

For $c_1,\ldots,c_\l$ jointly gaussian, consider the random holomorphic section
\begin{equation}
s(z):=\sum_{j=1}^\l c_js_j^N(z)
\end{equation}
and the map

\newcommand{\ay}{{(\nabhz s)(z)}}
\newcommand{\bee}{{(\nabhz\nabhz s)(z)}}
\newcommand{\ce}{{(\nabyz\nabhz s)(z)}}
\newcommand{\dee}{{(\nabhw s)(w)}}
\renewcommand{\ee}{{(\nabhw\nabhw s)(w)}}
\newcommand{\ef}{{(\nabyw\nabhw s)(w)}}
\begin{equation}
\bracketmatrix{c_1\cr\vdots\cr c_\l}\mathop{\longmapsto}^\lambda
\bracketmatrix{
\ay\cr
\bee\cr
\ce\cr
\dee\cr
\ee\cr
\ef\cr
}
=
\bracketmatrix{
\sum_{j=1}^\l c_j(\nabhz s_j^N)(z)\cr
\sum_{j=1}^\l c_j(\nabhz\nabhz s_j^N)(z)\cr
\sum_{j=1}^\l c_j(\nabyz\nabhz s_j^N)(z)\cr
\sum_{j=1}^\l c_j(\nabhw s_j^N)(w)\cr
\sum_{j=1}^\l c_j(\nabhw\nabhw s_j^N)(w)\cr
\sum_{j=1}^\l c_j(\nabyw\nabhw s_j^N)(w)\cr
}
\end{equation}
For fixed $z$ and $w$, $\lambda$ is a linear map so Lemma \ref{jointgausspreserve} says the entries of $\lambda(\vec c)$ are jointly gaussian.  Their covariance matrix is
\begin{equation}
  \Delta=\bracketmatrix{\Delta_{jk}}_{j,k=1}^{6}
\end{equation} where
\begin{equation}
\label{delta}
\Delta_{jk}=\E{\lambda_j\ot\overline{\lambda_k}}
\end{equation}
and
\begin{align}
  &\lambda_1=\ay \notag\\
  &\lambda_2=\bee \notag\\
  &\lambda_3=\ce \\
  &\lambda_4=\dee \notag\\
  &\lambda_5=\ee \notag\\
  &\lambda_6=\ef \notag
\end{align}
by abuse of notation.  Each entry should be replaced by its coefficient when written in a local frame about $z$ and $w$.

In fact all of the entries of $\Delta$ used in our calculation can be rewritten in terms of derivatives of an important invariant of $P$ called the ``two point kernel''.
\begin{definition}\label{twopointkernel} The \it two-point kernel\rm\ (or \it covariance kernel\rm) for $H^0(M,L^N)$ is defined by
\begin{equation}
\Pi_N(z,w):=\sum_{j=1}^\l s_j^N(z)\otimes\overline{s_j^N(w)}\in L^N_z\otimes\overline{L^N_w}\quad(z,w)\in M
\end{equation}
\end{definition}
Since $L^N$ is hermitian, $\Pi_N$ is the Szeg\H o kernel of $(L^N,h^N)$, i.e. the orthogonal projection 
\begin{equation}
\Pi_{N,h^N,\volm}:L^2(M,L^N)\to H^0(M,L^N)
\end{equation}
with respect to $\tts$.

$\Pi_N$ and the entries of $\Delta$ are related because
\begin{align}
  \E{s(z)\ot\overline{s(w)}}&=\E{\sum_{j=1}^\l c_js_j^N(z)\ot\overline{\sum_{k=1}^\l c_ks_k^N(w)}} \cr
  &=\sum_{j,k=1}^\l\E{c_j\bar c_k}s_j^N(z)\ot\overline{s_k^N(w)} \\
  &=\sum_{j,k=1}^\l\delta_{jk}s_j^N(z)\ot\overline{s_k^N(w)} =\sum_{j=1}^\l s_j^N(z)\ot\overline{s_j^N(w)} \notag\\
  &=:\Pi_N(z,w) \notag
\end{align}
so differentiating both sides yields, for instance,
\newcommand{\szz}{{\E{s(z)\ot\overline{s(z)}}}}
\newcommand{\szw}{{\E{s(z)\ot\overline{s(w)}}}}
\newcommand{\swz}{{\E{s(w)\ot\overline{s(z)}}}}
\newcommand{\sww}{{\E{s(w)\ot\overline{s(w)}}}}
\begin{align}
  \nabhz\nabyw\pizw&=\nabhz\nabyw\szw=\nabhz\E{s(z)\ot\nabyw\overline{s(w)}}\cr
  &=\nabhz\E{s(z)\ot\overline{\nabhw s(w)}}=\E{\nabhz s(z)\ot\overline{\nabhw s(w)}}\cr
  &=\Delta_{14}
\end{align}

\section{The Kac-Rice Theorem}
\def\fa{f_{\vec a}}
Various generalizations of Rice's original theorem \cite{riceold}(3) are referred to as ``The Kac-Rice Theorem'' in current literature.  What is meant by ``using the Kac-Rice theorem'' is that the expected density of zeroes of a random linear combination of functions
\begin{equation}
\fa(x):=\sum_{j=0}^\l a_jf_j(x)
\end{equation}
is found by integrating the joint distribution of $\fa$ and $\fa'$ with $\fa$ replaced by $0$ against $\|\fa'\|$.
For instance, the single-variable real Kac-Rice theorem says the following:\hfil\break
Take $P$ a probability measure on $\mathbb{R}^\l$ and $f_1,\ldots,f_\l$ a collection of analytic functions.  For fixed $t$,
\begin{equation}
x_t:=\fa(t):\mathbb{R}^\l\to\mathbb{R}
\end{equation}
and
\begin{equation}
\xi_t:=\fa'(t):\mathbb{R}^\l\to\mathbb{R}
\end{equation}
are random variables so they have a joint probability distribution, namely the distribution for the random variable
\begin{equation}
X_t:=\bracketmatrix{x_t\cr\xi_t}:\mathbb{R}^\l\to\mathbb{R}^2
\end{equation}
specifically
\begin{equation}
(X_t)_*P=:D_t(x,\xi)
\end{equation}
\begin{definition}\label{zerofunction} For the function $\fa$ define the measure
\begin{equation}
Z_{\fa}:=\sum_{\fa(t_j)=0}\delta_{t_j}
\end{equation}
\end{definition}
The demand that the $f_j$ be analytic ensures $\fa$ has only finitely many zeroes on bounded intervals, so $Z_{\fa}$ is a sum of point masses.  $Z_{\fa}$ can be generalized to mean the current of integration along the regular points of the variety $\{\fa=0\}$ even when not discrete, but that is not necessary for our computation.

The Kac-Rice theorem says
\begin{equation}
\E{Z_{\fa}}=K(t)\,dt
\end{equation}
where
\begin{equation}
K(t):=\intr D_t(0,\xi)|\xi|\,d\xi
\end{equation}
\begin{definition}\label{onepointcorr} The $K(t)\,dt$ above is called the \it one-point correlation\rm\ or \it one-point density\rm\ of $Z_{\fa}$.\end{definition}
\begin{definition}\label{npointcorr} If we define the measure of $n$ simultaneous zeroes\end{definition}
\begin{equation}
  Z_{\fa}^n:=\sum_{\{\vec t\in M\times\cdots\times M\mid\fa(t_1)=\cdots=\fa(t_n)=0\}}\delta_{\vec t}
\end{equation}
the Kac-Rice theorem says the same for a measure $K_n(\vec t)\,d\vec t$ called the \it$n$-point correlation\rm\ or \it$n$-point density\rm\ of $Z_{\fa}^n$.

This \doc\ is, in fact, concerned with the 2-point correlation of the simultaneous zeroes of two random \it sections\rm\ $\nabla s^N(z),\nabla s^N(w)\in\tsm\ot L^N$ for $s^N\in H^0(M,L^N)$ which are called \it critical points\rm\ of $s^N$.  Here ``random'' means that the $s^N$ are chosen with respect to the gaussian probability measure on $H^0(M,L^N)$ given in Definition \ref{randomsection} and $\dim_{\mathbb{C}}M=1$.  In this particular case, the Kac-Rice theorem is
\begin{equation}
\E{Z_{\nabla s(z),\nabla s(w)}}=K_2(z,w)\,dz\,dw
\end{equation}
where
\begin{align}
  K_2(z,w)&=\int_{W}D_{z,w}(\vec0,\vec\xi)\det(\xi^1(\xi^1)^*)^\frac12\det(\xi^2(\xi^2)^*)^\frac12\,d\xi \\
  &=\int_{W}D_{z,w}(\vec0,\vec\xi)\cdot|\det\xi^1|\cdot|\det(\xi^2)|\,\,d\xi
\end{align}
due to the makeup of $\xi^1$ and $\xi^2$.
Here $D_{z,w}(\vec x,\vec\xi)$ is the joint probability distribution of the random sections
\begin{align}
x_1&=\nabla s(z)=\nabhz s(z)\qquad\hbox{since $s$ is holomorphic} \notag\\
x_2&=\nabla s(w)=\nabhw s(w)\qquad\hbox{since $s$ is holomorphic}  \notag\\
\xi_1&=\nabh\nabla s(z)  \notag\\
\xi_2&=\naby\nabla s(z)  \\
\xi_3&=\nabh\nabla s(w)  \notag\\
\xi_4&=\naby\nabla s(w)  \notag
\end{align}
and
\begin{align*}
\xi^1&=``\nabla\nabla s(z)" =\bracketmatrix{\nabh\nabla s(z)&\overline{\naby\nabla s(z)}\cr\naby\nabla s(z)&\overline{\nabh\nabla s(z)}}=\bracketmatrix{\nabhz\nabhz s(z)&\overline{\nabyz\nabhz s(z)}\cr\nabyz\nabhz s(z)&\overline{\nabhz\nabhz s(z)}} \cr
\xi^2&=``\nabla\nabla s(w)" =\bracketmatrix{\nabh\nabla s(w)&\overline{\naby\nabla s(w)}\cr\naby\nabla s(w)&\overline{\nabh\nabla s(w)}}=\bracketmatrix{\nabhw\nabhw s(w)&\overline{\nabyw\nabhw s(w)}\cr\nabyw\nabhw s(w)&\overline{\nabhw\nabhw s(w)}} \cr
\end{align*}
as in \cite{symplectic}(33).  Here
\newcommand{\wone}{\nabh\!\!\left(\left(\tsm\ot L^N\right)_z\right)}
\newcommand{\wtwo}{\naby\!\!\left(\left(\tsm\ot L^N\right)_z\right)}
\newcommand{\wthree}{\nabh\!\!\left(\left(\tsm\ot L^N\right)_w\right)}
\newcommand{\wfour}{\naby\!\!\left(\left(\tsm\ot L^N\right)_w\right)}
\begin{align*}
&W=\cr
&\wone\!\!\times\!\!\wtwo\!\!\times\!\!\wthree\!\!\times\!\!\wfour
\end{align*}
and $d\xi$ means Lebesgue measure with respect to the hermitian metric on $W$.

As in the previous section, $x_1,x_2,\xi_1,\xi_2,\xi_3$, and $\xi_4$ are jointly gaussian with covariance matrix $\Delta=$(\ref{delta}) so
\begin{equation*}
D_{z,w}(x_1,x_2,\xi_1,\xi_2,\xi_3,\xi_4)={1\over\pi^6\det\Delta(z,w)}\expy{-\left\langle\Delta^{-1}(z,w)\bracketmatrix{\vec x\cr\vec\xi},\bracketmatrix{\vec x\cr\vec\xi}\right\rangle}
\end{equation*}
so
\begin{equation*}
D_{z,w}(0,0,\xi_1,\xi_2,\xi_3,\xi_4)={1\over\pi^6\det\Delta(z,w)}\expy{-\left\langle\Delta^{-1}(z,w)\bracketmatrix{\vec 0\cr\vec\xi},\bracketmatrix{\vec 0\cr\vec\xi}\right\rangle}
\end{equation*}

Dividing $\Delta$ into blocks
\begin{equation}
\label{thesemats}\Delta=\bracketmatrix{\bracketmatrix{A}_{2\times2}&\bracketmatrix{B}_{2\times4}\cr\bracketmatrix{B^*}_{4\times2}&\bracketmatrix{C}_{4\times4}}_{6\times6}
\end{equation}
and using the formula for inverting matrices presented in blocks
\begin{align}
  \Delta^{-1}&=\bracketmatrix{I&-A^{-1}B\cr0&I}\bracketmatrix{A^{-1}&0\cr0&\smash{\underbrace{(C-B^*A^{-1}B)^{-1}}_{\Lambda\mathrlap{{}^{-1}}}}}\bracketmatrix{I&0\cr-B^*A^{-1}&I} \cr
\noalign{\bigskip}
  &=\bracketmatrix{\bigg[A^{-1}+A^{-1}B\Lambda^{-1}B^*A^{-1}\bigg]_{2\times2}&\bigg[-A^{-1}B\Lambda^{-1}\bigg]_{2\times4}\cr\bigg[-\Lambda^{-1}B^*A^{-1}\bigg]_{4\times2}&\bigg[\Lambda^{-1}\bigg]_{4\times4}}
\end{align}
meaning
\begin{align*}
\Delta^{-1}
\bracketmatrix{
  \\
  \begin{bmatrix}0\\0\end{bmatrix}\\
  \scriptstyle2\times1\\
  \\
  \bracketmatrix{\cr \vec\xi \cr\cr}\cr\scriptstyle4\times1
}&= 
\bracketmatrix{\bigg[A^{-1}+A^{-1}B\Lambda^{-1}B^*A^{-1}\bigg]&\bigg[-A^{-1}B\Lambda^{-1}\bigg]\cr\bigg[-\Lambda^{-1}B^*A^{-1}\bigg]&\bigg[\Lambda^{-1}\bigg]}
\bracketmatrix{\cr\bracketmatrix{0\cr0}\cr\cr\bracketmatrix{\cr \vec\xi\cr\cr}\cr\cr}\cr
&=\bracketmatrix{-A^{-1}B\Lambda^{-1} \vec\xi\cr\Lambda^{-1} \vec\xi}
\end{align*}
so
\newcommand{\combovecxi}{\bracketmatrix{\vec 0\\ \vec\xi}}
\begin{align*}
\left[\left\langle\tilde\Delta^{-1}\combovecxi,\combovecxi\right\rangle\right]&=\big[(-A^{-1}B\Lambda^{-1} \vec\xi)^*\quad(\Lambda^{-1} \vec\xi)^*\big]\bracketmatrix{\cr\bracketmatrix{0\cr0}\cr\cr\bracketmatrix{\cr \vec\xi\cr\cr}\cr\cr}=\big[(\Lambda^{-1} \vec\xi)^* \vec\xi\big]\cr
&=\big[\angleenclose{\Lambda^{-1} \vec\xi, \vec\xi}\big]
\end{align*}
Using the formula for determinants of matrices presented as blocks says
\begin{equation}
\label{splitdelta}
\det\Delta=(\det A)(\det\Lambda)
\end{equation}
so the integrand in $K_2(z,w)$ is
\begin{equation}
{e^{-\angleenclose{\Lambda^{-1}(z,w)\vec\xi,\vec\xi}}\over\pi^6\det A(z,w)\det\Lambda(z,w)}\det(\xi^1(\xi^1)^*)^\frac12\det(\xi^2(\xi^2)^*)^\frac12\,d\xi
\end{equation}
Now, as mentioned above, the particular entries of the $\xi^j$ allow the\break$\det(\xi^j(\xi^j)^*)^\frac12$ to be simplified as in \cite{cpsv1}(34).
\begin{align*}
\det(\xi^1(\xi^1)^*)^\frac12&\det(\xi^2(\xi^2)^*)^\frac12\cr
&=\det\left(\bracketmatrix{{\xi_1}&\bar\xi_2\cr \xi_2&\bar \xi_1\cr}\bracketmatrix{{\xi_1}&\bar\xi_2\cr \xi_2&\bar \xi_1\cr}^*\right)^\frac12
\det\left(\bracketmatrix{{\xi_1}&\bar\xi_2\cr \xi_2&\bar \xi_1\cr}\bracketmatrix{{\xi_1}&\bar\xi_2\cr \xi_2&\bar \xi_1\cr}^*\right)^\frac12
\\
&= \left(\left(|\xi_1|^2-|\xi_2|^2\right)^2\right)^\frac12 \left(\left(|\xi_3|^2-|\xi_4|^2\right)^2\right)^\frac12 \\
&= \big||\xi_1|^2-|\xi_2|^2\big|\cdot\big||\xi_3|^2-|\xi_4|^2\big| \\
\\
\end{align*}
Identifying $W$ with $\mathbb{C}^4$, the two-point correlation of critical points on a Riemann surface is
\begin{equation}
\label{myk2}
K_2(z,w)=\intcf {e^{-\angleenclose{\Lambda^{-1}\vec\xi,\vec\xi}}\over\pi^6\det A\det\Lambda}
\big||\xi_1|^2-|\xi_2|^2\big|\cdot\big||\xi_3|^2-|\xi_4|^2\big|
\,d\vec\xi
\end{equation}

\section{The Scaling Limit}

Although the critical point equation $\nabla s(z)=0$ is not holomorphic, it is still smooth, so the results of \cite{bsz} about the zeroes of random smooth sections apply.  The main theorem (3.6) of \cite{bsz} actually says that as $N\to\infty$, the ``scaling limit'' of the correlation of zeroes is independent of choice of $M$, $L$, and~$h$. Specifically
\begin{equation}
{1\over N^{nk}}\knnk\left({z_1\over\sqrt N},\ldots,{z_n\over\sqrt N}\right)=\knkm(z_1,\ldots,z_n)+\bigo{1\over\sqrt N}
\end{equation}
where $\knkm$ depends only on $n$, $k$, and $m$.

If we write the 2-point correlation (\ref{myk2}) as $K_2^N$ to reflect the $N$ dependency in our case, this theorem says
\begin{equation}
{1\over N^2}K_2^N\left(\zovern,\wovern\right)=K^\infty_2(z,w)+\bigo{1\over\sqrt N}
\end{equation}
so proving Theorem \ref{main} only requires that we calculate the $N$ limit of the left hand side for $z$ close to $w$ for a particularly nice choice of $M$, $L$, and $h$.

Theorem 3.1 of \cite{bsz} says roughly that in the $N$ limit, the\break${1\over N}\Pi_N(\zovern,\wovern)$ entries in $K_2^N$ can be replaced by $\pih(\tilde z,\tilde w)$, the Szeg\H o kernel of the \it reduced Heisenberg Group\rm\ $\,\hred$ which we define below.  For a more thorough geometric discussion of $\hred$'s construction and properties, see \cite{bsz}\S1.3.2.

\begin{definition}\label{hred}
Take the trivial bundle $L:=\mathbb{C}\times\mathbb{C}$ over $\mathbb{C}$ with curvature $h(z):=e^{-|z|^2}$.  Then $h^{-1}(z)=e^{|z|^2}$ gives a metric on the dual bundle $L^*\to \mathbb{C}$.  Form the ``circle bundle'', $X$, of elements $v\in L^*$ such that $h^{-1}(v)=1$.  i.e. $X=\{(z,\zeta)\in\mathbb{C}\times\mathbb{C}\mid|\zeta|=e^{-{|z|^2\over2}}\}$.  This bundle $X\to \mathbb{C}$ is the \it reduced Heisenberg Group\rm, written $\hred$.  When necessary, since $X\iso\mathbb{C}\times S^1$, we will write elements as $(z,\theta)$.  Because $L\to \mathbb{C}$ here is the trivial bundle, we may use the frame $e_U=e_{\mathbb{C}}=1$, the constant function 1.
\end{definition}

The Szeg\H o kernel for $\hred$, $\pih$ is by definition the kernel of orthogonal projection from ${\cal L}^2(\hred)$ to the Hardy space for $\hred$, ${\cal H}_1^2$.  These spaces can be viewed as
\begin{equation}
{\cal L}^2(\hred)=\biggl\{\tilde f(z,\theta)=f(z)e^{i\theta}e^{-|z|^2\over2}\biggm|f\in{\cal C}^\infty, \intc f(z)e^{-|z|^2}\,dz\,\dzb<\infty\biggr\}
\end{equation}
and
\begin{equation}
{\cal H}^2_1=\biggl\{\tilde f(z,\theta)=f(z)e^{i\theta}e^{-|z|^2\over2}\biggm|f{\rm\ holomorphic}, \intc f(z)e^{-|z|^2}\,dz\,\dzb<\infty\biggr\}
\end{equation}
In fact,
$\phi_k(z):={1\over\sqrt{\pi k!}}z^ke^{i\theta}e^{-|z|^2\over2}$ are an orthonormal basis for ${\cal L}^2(\hred)$ since
\begin{equation}
{1\over\pi\sqrt{j!k!}}\intct z^j\zb^k\,e^{i(\theta-\phi)}e^{-|z|^2}\,\left({i\over2}\,dz\,\dzb\right)=\delta_{jk}
\end{equation}
so the kernel for the orthogonal projection ${\cal L}^2(\hred)\to{\cal H}^2_1$ is
\begin{equation}
\tilde\pih((z,\theta),(w,\phi)) {}=\sum_{k=0}^\infty\phi_k(z)\overline{\phi_k(w)} ={1\over\pi}e^{i(\theta-\phi)}e^{z\bar w-\frac12|z|^2-\frac12|w|^2}
\end{equation}
The extra factor $e^{\frac12|z|^2-\frac12|w|^2}$ appears because we have chosen a non-trivial metric $h$ for the trivial bundle.  As long as the connection is computed correctly, we could work in any frame, so the formula becomes
\begin{equation}
\pih(z,w)={1\over\pi}e^{z\bar w}
\end{equation}
based on the frame $e_{\mathbb{C}}=1$ (see Definition \ref{twopointkernel}).  Since (\ref{myk2}) doesn't change when $\pih(z,w)$ is multiplied by a non-zero scalar (hence likewise if $\Delta$ is multiplied by a non-zero scalar), we will use
\begin{equation}
\pih(z,w)=e^{z\bar w}
\end{equation}
for ease of calculation.

The Chern connection is defined by its action on a frame
\begin{equation}
\nablaHz e_{\mathbb{C}}(z)=\nablaHzh e_{\mathbb{C}}(z)=-\bar z\,dz\ot e_{\mathbb{C}}(z)
\end{equation}
Often, we will write $\pih((z,\theta),(w,\phi))$ and mean only the function coefficient $e^{z\bar w}$.

For our case, we form the Szeg\H o kernel for the $N$th power of an arbitrary positive line bundle over a Riemann surface 
$(L^N,h^N)\to M$ similarly, defining the circle bundle $$X_M:=\{s\in L^*\mid \angleenclose{s(z),s(z)}=1\}$$ and calling the Szeg\H o kernel
\begin{equation}
\pit:\tilde X\times\bar X\longrightarrow\mathbb{C}
\end{equation}
This Szeg\H o kernel is related to our earlier two-point function by
\begin{align*}
\tilde\Pi_N&\left(\left(\zovern,0\right),\left(\wovern,0\right)\right)=\cr
&\left(
\;
\Pi_N\left(\zovern,\wovern\right)
\;,\;\;h(z)^{N\over2}(e^*_U)^N(z)\ot h(w)^{N\over2}(e_U^*)^N(w)
\;
\right)
\end{align*}

With those definitions, we are in the position to state Theorem 3.1 of \cite{bsz} precisely in our case:

Choose $z_0\in M$, local coordinate map $z$, and a local frame $e_L$ over a neighborhood of $z_0$ so that 
\begin{equation}
\Theta_h(z_0)=(\del\delb\log h)(z_0)=dz\w\dzb|_{z_0}
\end{equation}
and
\begin{equation}
\ddz h(z_0)=\ddzdz h(z_0)=0
\end{equation}
then
\begin{align}
{1\over N}\tilde\Pi_N&\left(\left(z_0+{u\over\sqrt N},{\theta\over N}\right),\left(z_0+{v\over\sqrt N},{\phi\over N}\right)\right)=\cr
&\qquad\tilde\pih((u,\theta),(v,\phi))+\bigo{1\over\sqrt N}
\end{align}
where $\bigo{1\over\sqrt N}$ means a function whose ${\cal C}^k$-norm is $\bigo{1\over\sqrt N}$ in the standard sense for all $k$.

For
\begin{align}
  f_w(z)&:=\pit\bigg((z_0+z,0)\;,\;(z_0+w,0)\bigg) \\
  g_w(z)&:=f_w\left(\zovern\right)=\pit\bigg((z_0+\zovern,0)\;,\;(z_0+w,0)\bigg)\label{thatonethere}
\end{align}
\cite{bsz}'s theorem says
\begin{equation}
{1\over N}f_\wovern\left(\zovern\right)={1\over N}g_\wovern(z)=\pih((z,0),(w,0))+\bigo{1\over\sqrt N}
\end{equation}
Taking derivatives on both sides of (\ref{thatonethere})
\begin{equation}
g_w'(z)={1\over\sqrt N}f_w'\left(\zovern\right)
\end{equation}
so
\begin{align}
  {1\over N^\frac32}\left(\ddz{\pit}\right)&\left((z_0+\zovern,0),(z_0+\wovern,0)\right)\cr
  &={1\over N^\frac32}f_\wovern'\left(\zovern\right)={1\over N}g_\wovern'(z) \\
  &={1\over N}\ddz{\phantom{x}}\left(f_\wovern\left(\zovern\right)\right)=\ddz{\phantom x}\left({1\over N}f_\wovern\left(\zovern\right)\right) \\
  &=\ddz{\phantom x}\left(\pih((z,0),(w,0))+\bigo{1\over\sqrt N}\right)
\end{align}
Since we chose $h$ so that $\ddzop\log h=-\bar z+\bigo{|z|^2}$,
\begin{align*}
N\left(\ddzop\log h\right)\left(\zovern\right)&=N\left.(-\bar z+\bigo{|z|^2})\right|_\zovern\cr&=N\left(-{\bar z\over\sqrt N}+\bigo{|z|^2\over N}\right)\cr&=-\sqrt N\bar z+\bigo{|z|^2}
\end{align*}
so
\begin{align*}
  \left({1\over N^\frac32}\right.&\left.\nabz^N\Pi_N\right)\left(\zovern,\wovern\right)\cr
&={1\over N^\frac32}\left[d+N\del\log h\right]\Pi_N\left(\zovern,\wovern\right) \\
&={1\over N^\frac32}\ddz{\Pi_N}\left(\zovern,\wovern\right)+{1\over N^\frac32}N\del\log h\left(\zovern\right) \\
&={1\over N^\frac32}\ddz{\pit}\left((\zovern,0),(\wovern,0)\right)+{1\over N^\frac32}N\del\log h\left(\zovern\right) \\
&=\ddzop\left(\pih((z,0),(w,0))+\bigo{1\over\sqrt N}\right)+{1\over N^\frac32}(-\sqrt N\bar z+\bigo{|z|^2}) \\
&=\ddzop\left(\pih((z,0),(w,0))+\bigo{1\over\sqrt N}\right)-{1\over N}\bar z+{1\over N^\frac32}\bigo{|z|^2} \\
&=\nablaHz\pih((z,0),(w,0))+\bigo{1\over\sqrt N}
\end{align*}
so
\newcommand{\coro}{\bigo{1\over\sqrt N}}
\begin{align*}
  {1\over N^2}\A1p1{p'}&:={1\over N^2}\nabhz\nabyw\spppppp \cr
    &=\nabHzh\nabHwy\sppppp+\coro \cr
\end{align*}\begin{align*}
  {1\over N^\frac52}\B1p1{p'}1&:={1\over N^\frac52}\nabhz\nabyw\nabyw\spppppp \cr
  &=\nabHzh\nabHwy\nabHwy\sppppp+\coro  \cr
\end{align*}\begin{align*}
  {1\over N^\frac52}\B1p1{p'}2&:={1\over N^\frac52}\nabhz\spppppp\cr
&=\nabHzh\sppppp+\coro \cr
\end{align*}\begin{align*}
  {1\over N^3}&\C1p11{p'}1:={1\over N^3}\nabhz\nabhz\nabyw\nabyw\spppppp \cr
    &=\nabHzh\nabHzh\nabHwy\nabHwy\sppppp+\coro \cr
\end{align*}\begin{align*}
  {1\over N^3}\C1p11{p'}2&:={1\over N^3} \nabhz\nabhz\spppppp \cr
&=\nabHzh\nabHzh\sppppp+\coro \cr
\end{align*}\begin{align*}
  {1\over N^3}\C1p21{p'}1&:={1\over N^3} \nabyw\nabyw\spppppp \cr
    &=\nabHwy\nabHwy\sppppp+\coro\cr
\end{align*}\begin{align*}
  {1\over N^3}\C1p21{p'}2&:={1\over N^3} \spppppp \cr
&=\sppppp+\coro \cr
\end{align*}
and
\begin{equation}
{1\over N^3}\Lambda=\lamH+\coro
\end{equation}
where $\lamH$ is $\Lambda$ with all of the $\Pi_N$ terms replaced by $\pih$ terms and
\begin{align*}
  B&=\bracketmatrix{\bracketmatrix{\B1p1{p'}1}\bracketmatrix{\B1p1{p'}2}} \cr
  C&=
\bracketmatrix{
  \bracketmatrix{  \C1p11{p'}1 } &
  \bracketmatrix{  \C1p11{p'}2 } \cr
  \bracketmatrix{  \C1p21{p'}1 } &
  \bracketmatrix{  \C1p21{p'}2 } \cr
}
\cr
\end{align*}

So, finally,
\begin{align}
{1\over N^2}K_2^N\left(\zovern,\right.&\left.\wovern\right)\cr
&={N^{-2}\over\pi^6N^{16}\det\deltaH}\intcf e^{-N^{-3}\angleenclose{(\lamH)^{-1}\vec\xi,\vec\xi}}\|\det\vec\xi\|\,d\vec\xi
\end{align}
where $\deltaH$ is $\Delta$ with all of the $\Pi_N$ terms replaced by $\pih$ terms and $\|\det\vec\xi\|$ is shorthand for\break$|\det\xi^1||\det\xi^2|$.

Now perform the change of variables
\begin{equation}
\vec v:=\bracketmatrix{h_1\cr x_1\cr h_2\cr x_2}:=N^{-\frac32}\bracketmatrix{\xi_1\cr\xi_2\cr\xi_3\cr\xi_4}=N^{-\frac32}\vec\xi
\end{equation}
Now $\|\det\vec\xi\|=N^6\|\det\vec v\|$ and $d\vec\xi=N^{12}d\vec v$ so
\begin{equation}
  {1\over N^2}K_2^N\left(\zovern,\wovern\right)={1\over\pi^6\det\deltaH}\intcf e^{-\angleenclose{(\lamH)^{-1}\vec v,\vec v}}\|\det\vec v\|\,d\vec v
\end{equation}

\section{Additional Definitions and Notes}
\begin{definition}\label{crit} A \it critical point of $s\in H^0(M,L)$ with respect to $\nablauh$\rm\ is any $z\in M$ such that $\nablauh s(z)=0$.\end{definition}  For almost any $s\in H^0(M,L)$, the set
\begin{equation}
\mathop{\hbox{Crit}}\!\!{}^{\nablauh}(s):=\{z\in M\mid\nablauh s(z)=0\}
\end{equation}
is discrete, so the following definition makes sense.

\begin{definition}\label{Crit} The measure associated to $\mathop{\hbox{Crit}}^{\nablauh}(s)$ is
\begin{equation}
C^{\nablauh}_s:=\sum_{z\in\crit}\delta_z
\end{equation}
where $\delta_z$ is the point-mass at $z$.
\end{definition}

\begin{definition}\label{vol} The volume form associated to $h$, $dV_h$, is given by
\begin{equation}dV_h:={1\over m!}\left(-{i\over2}\del\delb\log h\right)^m\end{equation}\end{definition}
\begin{definition}\label{twopt} The \it two-point correlation\rm\ was not originally defined as the necessary integrand in the Kac-Rice formula.  It can be defined directly as
\begin{equation}
K_2(z,w)=\lim_{\e\to0}{ \E{\pound[\Crit\cap B_\e(z)]\cdot\pound[\Crit\cap B_\e(w)]\strut} \over \Vol{B_\e(z)\times B_\e(w)} }
\end{equation}
where
\begin{align}
B_\e(z):=&\hbox{the ball of radius $\e$ about $z$} \cr
\pound\!A:=&\hbox{the cardinality of $A$}
\end{align}
\end{definition}
$K_2(z,w)$ also comes from the distribution equation
\begin{equation}
\E{C^{\nablauh}_s\boxtimes C^{\nablauh}_s}=K_2(z,w)\;dV_h(z)\boxtimes dV_h(w)
\end{equation}
where $\boxtimes$ is the product on currents defined in \cite{numbervariance} by
\begin{equation}
S\boxtimes T=\pi_1^*S\w\pi_2^*T\in{\cal D}'^{p+q}(M\times M)
\end{equation}
for $S\in{\cal D}'^p(M)$ and $T\in{\cal D}'^q(M)$ where $\pi_1,\pi_2:M\times M\to M$ are the projections to the first and second factors, respectively.

\begin{definition}\label{bigoh} Throughout the calculation  $\bigoo n$, for $n\in[1,\infty)$, will mean a function $f$ such that 
\begin{equation*}
  \exists\delta,M>0\quad\st\quad\bigg(t\in(0,\delta)\implies|f(t)|\leq Mt^n\bigg)
\end{equation*}
In fact, every time it is used in this \doc, it is sufficient to think of $\bigoo n$ as a function real analytic at 0 whose first non-zero Taylor term when expanded there is a multiple of $t^n$.  i.e.
\begin{equation*}
f(t)=a_nt^n+a_{n+1}t^{n+1}+a_{n+2}t^{n+2}+\cdots
\end{equation*}
with $a_n\neq0$.  Technically, any time $\bigoo n$ is mentioned, it would be necessary to mention the radius of convergence, and often manipulation of a term involving $\bigoo n$ will result in a new term involving $\bigoo n$, where the radius of convergence has shrunk.  This calculation only requires that the radius of convergence stays positive.  As long as this is the case, the reader should not pay attention to this technicality.
\end{definition}

\section{Proof of the Main Result}
To prove Theorem \ref{main}, we want to find 
\begin{equation}
\kinf211(\zeta_1,\zeta_2)=\lim_{N\to\infty}{1\over N\npower}\kn21\left({\zeta_1\over\sqrt N},{\zeta_2\over\sqrt N}\right)
\end{equation}
for any $\zeta_1,\zeta_2\in M$ where $\dim_\mathbb{C} M=1$.  As above, Kac-Rice says we need only calculate
\begin{equation}
{1\over N^2}K_2^N\left(\zovern,\wovern\right)={1\over\pi^6\det\deltaH}\intcf e^{-\angleenclose{(\lamH)^{
-1}\vec v,\vec v}}\|\det\vec v\|\,d\vec v
\end{equation}
As in (\ref{splitdelta}), $\det\deltaH=(\det A^{\bf H})(\det\lamH)$  and since $K_{211}^\infty(z,w)$ depends only on the distance between $z$ and $w$, we can choose $z=0$ and $w=r>0$. So 
\begin{align*}
&\kinf211(z,w)=J(r):=\cr
&{1\over\pi^6\det(A( 0, r))\det(\Lambda( 0, r))}\intcf\!\!\left||\h1|^2-|x_1|^2\right|\!\cdot\!\left||\h2|^2-|x_2|^2\right|\etothelor dv
\end{align*}
where
\begin{equation}
v=\bracketmatrix{\h1\cr x_1\cr \h2\cr x_2}
\end{equation}

The absolute value bars simplification using Wick's formula as in\break\cite{bsz}, the fact that $\nabla s$ is not a holomorphic section bars using the Poincar\'e-Lelong formula as in \cite{pl}, and, unfortunately, we are unable to use the ingenious method used in the proof of Lemma 3.1 of \cite{cpsv2} where the authors were able to rewrite $J$ using Fourier transforms.  In \cite{cpsv2}, the authors noticed that they could replace each $\displaystyle\big||h_j|^2-|x_j|^2\big|$ by 
\begin{equation}
\lim_{\e_j,\e'_{\!j}\to0}{1\over2\pi}\intr\intr|p|e^{-\e_j|\xi|^2-\e'_{\!j}|p|^2}\cdot e^{i\xi(p-|h_j|^2+|x_j|^2)}\,d\xi\,dp
\end{equation}
because it can be simplified to 
\begin{align*}
&\limep{1\over2\pi}\intr|p|e^{-\e'_{\!j}|p|^2}\left(\intr \lime e^{i\xi(p-|h_j|^2+|x_j|^2)}e^{\e_j|\xi|^2}\,d\xi\right)\,dp \cr
=&\limep{1\over2\pi}\intr|p|e^{-\e'_{\!j}|p|^2}\left(\intr \lime e^{i\xi(p-|h_j|^2+|x_j|^2)}\,d\xi\right)\,dp \cr
=&\limep{1\over2\pi}\intr|p|e^{-\e'_{\!j}|p|^2}\big(2\pi\delta_0(p-|h_j|^2+|x_j|^2)\big)\,dp \cr
=&\limep\intr|p|e^{-\e'_{\!j}|p|^2}\delta_{|h_j|^2-|x_j|^2}(p)\,dp \cr
=&\limep\big||h_j|^2-|x_j|^2\big|\,e^{-\e'_{\!j}\left||h_j|^2-|x_j|^2\right|^2} \cr
=&\big||h_j|^2-|x_j|^2\big| \cr
\end{align*}
With that substitution and some work,
\begin{equation}
J=\limes\je={1\over4\pi^2}\intrf{|p_1p_2|e^{i(\xi_1p_1)}e^{i(\xi_2p_2)}\over\detildi}\,d\xi_1\,d\xi_2\,dp_1\,dp_2
\end{equation}
with
\begin{equation}
D=\bracketmatrix{-\xi_1\cr&\xi_1\cr&&-\xi_2\cr&&&\xi_2}
\end{equation}

In the case of \cite{cpsv2}, the authors took advantage of the fact that $\detildi$ was the product of many linear factors and the integral could be done using residues.  In our case $\detildi$ is an extremely complicated rational function of $r$ and $e^{r^2}$.  So in this \doc\ we will carefully expand $J$ as a function of $r$.

Calculate $\Lambda$ as in \cite{bsz} and \cite{cpsv1}:

\begin{align*}
  \Lambda&:=C-B^*A^{-1}B\cr
   A&:=\bracketmatrix{\A jp{j'}{p'}}=\bracketmatrix{\E {x_j^p\bar x_{j'}^{p'}}}=\bracketmatrix{ \A1111 & \A1112 \cr\cr \A1211 & \A1212 } \cr
   B&:=\bracketmatrix{\B jp{j'}{p'}{q'}}=\bracketmatrix{\E{x_j^p\bar\xi_{j'q'}^{p'}}}=\bracketmatrix{ \B11111&\B11112&\B11121&\B11122\cr\cr \B12111&\B12112&\B12121&\B12122\cr } \cr
   C&:=\bracketmatrix{\C jpq{j'}{p'}{q'}}=\bracketmatrix{\E{\xi_{jq}^p\bar\xi_{j'q'}^{p'}}}=\bracketmatrix{ \C111111&\C111112&\C111121&\C111122\cr\cr \C112111&\C112112&\C112121&\C112122\cr\cr \C121111&\C121112&\C121121&\C121122\cr\cr \C122111&\C122112&\C122121&\C122122\cr } \cr
\end{align*}
where $p,p'\in\{1,2\}$\quad$q,q'\in\{1,2\}$ and the $p,q$ index the rows, and the $p',q'$ index the columns.

\begin{align*}
  \A1p1{p'}&:=\nabhz\nabyw\spp \cr
  \B1p1{p'}1&:=\nabhz\nabyw\nabyw\spp \cr
  \B1p1{p'}2&:=\nabhz\spp \cr
  \C1p11{p'}1&:=\nabhz\nabhz\nabyw\nabyw\spp \cr
  \C1p11{p'}2&:= \nabhz\nabhz\spp\cr
  \C1p21{p'}1&:= \nabyw\nabyw\spp\cr
  \C1p21{p'}2&:= \spp\cr
\end{align*}
Notice $p,q$ index rows and $p',q'$ index columns.

For \it any\rm\ function $\pinn$ holomorphic in $z$ and antiholomorphic in $w$ where
\begin{align*}
  \nabz e(z)&=\nabhz e(z)=g(z)\,dz\tay\ez \cr
  \nabz\ebz&=\nabyz\ebz=\overline{g(z)}\,\dzb\tay\ebz \cr
  \nabyz\ez&=0 \cr
  \nabhz\ebz&=0 \cr
\end{align*}
we have
\begin{align*}
  \nabhz(&\orig)\cr
  &=\left[\ddz\Pi\tay\dz\right]\tay\ezebw+\pinn\tay\gdzez\tay\ebw \cr
  &=\left(\ddz\Pi+g(z)\pinn\right)\tay\dz\tay\ezebw \cr
\end{align*}\begin{align*}
  \nabhz\nabhz(&\orig)\cr
  &=\left[\left(\ddzdz\Pi+\ddz g\pinn+g(z)\ddz\Pi\right)\tay\dz\right]\tay\dz\tay\ezebw \cr
  &\phantom={}+\left(\ddz\Pi+g(z)\pinn\right)\tay\dz\tay\gdzez\tay\ebw \cr
  &=\left(\ddzdz\Pi+\ddz g\pinn+g(z)\ddz\Pi+g(z)\ddz\Pi+g(z)^2\pinn\right) \cr
  &\qquad\qquad\tay\dz\tay\dz\tay\ezebw \cr
  &=\left(\ddzdz\Pi+\ddz g\pinn+2g(z)\ddz\Pi+g(z)^2\pinn\right)\cr
  &\qquad\qquad\tay\dz\tay\dz\tay\ezebw \cr
\end{align*}\begin{align*}
  \nabyw(&\orig)\cr
  &=\left[\ddwb\Pi\tay\dwb\right]\tay\ezebw+\pinn\tay\ez\tay\gdwbebw \cr
  &=\left(\ddwb\Pi+\overline{g(w)}\pinn\right)\tay\dwb\tay\ezebw \cr
\end{align*}\begin{align*}
  \nabyw&\nabyw(\orig)\cr
  &=\left[\left(\ddwbdwb\Pi+\ddwb{\bar g}\pinn+\overline{g(w)}\ddwb\Pi\right)\tay\dwb\right]\tay\dwb\tay\ezebw \cr
  &=\left(\ddwb\Pi+\overline{g(w)}\pinn\right)\tay\dwb\tay\ez\tay\gdwbebw \cr
  &=\left(\ddwbdwb\Pi+\ddwb{\bar g}\pinn+2\overline{g(w)}\ddwb\Pi+\overline{g(w)}^2\pinn\right)\cr
  &\qquad\qquad\tay\dwb\tay\dwb\tay\ezebw \cr \noalign{\break}
\end{align*}\begin{align*}
  \nabhz&\nabyw(\orig)\cr
  &=\left[\left(\ddzdwb\Pi+\overline{g(w)}\ddz\Pi\right)\tay\dz\right]\tay\dwb\tay\ezebw \cr
  &\phantom={}+\left(\ddwb\Pi+\overline{g(w)}\pinn\right)\tay\dwb\tay\gdzez\tay\ebw \cr
  &=\left(\ddzdwb\Pi+g(z)\ddwb\Pi+\overline{g(w)}\ddz\Pi+g(z)\overline{g(w)}\pinn\right)\cr
  &\qquad\qquad\tay\dz\tay\dwb\tay\ezebw \cr
\end{align*}\begin{align*}
  \nabhz&\nabyw\nabyw(\orig)\cr
  &=\left[\left(\ddzdwbdwb\Pi+\ddwb{\bar g}\ddz\Pi+2\overline{g(w)}\ddzdwb\Pi+\overline{g(w)}^2\ddz\Pi\right)\tay\dz\right]\cr
  &\qquad\qquad\tay\dwb\tay\dwb\tay\ezebw \cr
  &\phantom={}+\left(\ddwbdwb\Pi+\ddwb{\bar g}\pinn+2\overline{g(w)}\ddwb\Pi+\overline{g(w)}^2\pinn\right)\cr
  &\qquad\qquad\tay\dwb\tay\dwb\tay\gdzez\ebw \cr
  &=\left(
\begin{aligned}
  &\ddzdwbdwb\Pi+g(z)\ddwbdwb\Pi+2\overline{g(w)}\ddzdwb\Pi+\left(\ddwb{\bar g}+\overline{g(w)}^2\right)\ddz\Pi \cr
  +&2g(z)\overline{g(w)}\ddwb\Pi+g(z)\left(\ddwb{\bar g}+\overline{g(w)}^2\right)\pinn
\end{aligned}
\right) \cr
&\qquad\qquad\tay\dz\tay\dwb\tay\dwb\tay\ezebw \cr
\end{align*}

In this particular case where $g(z)=-\zb$ because $h(z):=e^{-z\zb}$, $\pinn=e^{z\wb}$, $\zz1:=0$, and $\zz2:=r$, we~have
\begin{align}
  \A1p1{p'}&={e^{z\bar{ w } }} \left( 1+z\bar{ w } -\bar{ z } z-w\bar{ w } +\bar{ z } w \right) \cr
  \B1p1{p'}1&={e^{z\bar{ w } }} \left( z-w \right)  \left( z \bar w -\bar{ z } z+2+\bar{ z } w-w\bar{ w }  \right)\cr
  \B1p1{p'}2&={e^{z\bar{ w } }} \left( \bar{ w } -\bar{ z }  \right)\cr
  \C1p11{p'}1&={e^{z\bar{ w } }} \left(
\begin{aligned}
  &2-4\,\bar{ z } z-4\,w\bar{ w } +4\,\bar{ z } w-2\,\bar{ w } \bar{ z } {z}^{2}-2\, \bar w ^{2}zw-2\,\bar{ w } \bar{ z } {w}^{2} \cr
  -&2\, \bar z ^{2}zw +4\,z\bar{ w } + \bar w ^{2}{z}^{2}+ \bar w ^{2}{w}^{2}+ \bar z ^{2}{z}^{2}+ \bar z ^{2}{w}^{2}+4\,\bar{ w } z\bar{ z } w
\end{aligned}
\right)\cr
  \C1p11{p'}2&={e^{z\bar{ w } }} \left( \bar{ w } -\bar{ z }  \right) ^{2}\cr
  \C1p21{p'}1&={e^{z\bar{ w } }} \left( z-w \right) ^{2}\cr
  \C1p21{p'}2&={e^{z\bar{ w } }}\cr
\end{align}
so
\begin{align}
A&=\bracketmatrix{ 1&1-{r}^{2}\cr \noalign{\medskip}1-{r}^{2}&{e^{{r}^{2}}}}\cr
B&=\bracketmatrix{0&0&-r\, \left( 2-{r}^{2} \right) &r\cr \noalign{\medskip}r\, \left( 2-{r}^{2} \right) &-r&0&0}\cr
C&=\bracketmatrix{2&0&-4\,{r}^{2}+{r}^{4}+2&{r}^{2}\cr \noalign{\medskip}0&1&{r}^{2}&1\cr \noalign{\medskip}-4\,{r}^{2}+{r}^{4}+2&{r}^{2}&2\,{e^{{r}^{2}}}&0\cr \noalign{\medskip}{r}^{2}&1&0&{e^{{r}^{2}}}}
\end{align}
so $$\Lambda(r)={1\over-{e^{{r}^{2}}}+1-2\,{r}^{2}+{r}^{4}}\cdot\genm1{}{}$$ where
$$
\begin{aligned}
\genmr111&=-2\,{e^{{r}^{2}}}+2-2\,{r}^{4}+{r}^{6} \cr
\genmr121&={r}^{2} \left( -2+{r}^{2} \right) \cr
\genmr131&= 4\,{r}^{2}{e^{{r}^{2}}}-4\,{r}^{2}+3\,{r}^{4}-{r}^{6}-{r}^{4}{e^{{r}^{2}}}-2\,{e^{{r}^{2}}}+2\cr
\genmr141&= -{r}^{2} \left( {e^{{r}^{2}}}+1-{r}^{2} \right) \cr
\end{aligned}
$$
$$
\begin{aligned}
\genmr112&={r}^{2} \left( -2+{r}^{2} \right) \cr
\genmr122&=-{e^{{r}^{2}}}+1-{r}^{2}+{r}^{4}\cr
\genmr132&=-{r}^{2} \left( {e^{{r}^{2}}}+1-{r}^{2} \right) \cr
\genmr142&=-{e^{{r}^{2}}}+1-{r}^{2}\cr
\end{aligned}
$$
$$
\begin{aligned}
\genmr113&=4\,{r}^{2}{e^{{r}^{2}}}-4\,{r}^{2}+3\,{r}^{4}-{r}^{6}-{r}^{4}{e^{{r}^{2}}}-2\,{e^{{r}^{2}}}+2\cr
\genmr123&=-{r}^{2} \left( {e^{{r}^{2}}}+1-{r}^{2} \right) \cr
\genmr133&=-{e^{{r}^{2}}} \left( 2\,{e^{{r}^{2}}}-2+2\,{r}^{4}-{r}^{6} \right) \cr
\genmr143&={r}^{2} \left( -2+{r}^{2} \right) {e^{{r}^{2}}}\cr
\end{aligned}
$$
$$
\begin{aligned}
\genmr114&=-{r}^{2} \left( {e^{{r}^{2}}}+1-{r}^{2} \right) \cr
\genmr124&=-{e^{{r}^{2}}}+1-{r}^{2}\cr
\genmr134&={r}^{2} \left( -2+{r}^{2} \right) {e^{{r}^{2}}}\cr
\genmr144&=-{e^{{r}^{2}}} \left( {e^{{r}^{2}}}-1+{r}^{2}-{r}^{4} \right)
\end{aligned}
$$
which is actually a function of $t:=r^2>0$.
\goodbreak
\noindent i.e. $$\Lambda(t)={1\over-{e^{t}}+1-2\,t+{t}^{2} }\cdot\genm2{}{}$$
where
$$
  \begin{aligned}
  \genm211&=-2\,{e^{t}}+2-2\,{t}^{2}+{t}^{3} \cr
  \genm221&=t\, \left( -2+t \right)  \cr
  \genm231&=4\,t\,{e^{t}}-4\,t+3\,{t}^{2}-{t}^{3}-{t}^{2}{e^{t}}-2\,{e^{t}}+2 \cr
  \genm241&=t\, \left( -{e^{t}}-1+t \right)  \cr
  \end{aligned}
$$
$$
  \begin{aligned}
  \genm212&=t\, \left( -2+t \right)  \cr
  \genm222&=-{e^{t}}+1-t+{t}^{2} \cr
  \genm232&=t\, \left( -{e^{t}}-1+t \right)  \cr
  \genm242&=-{e^{t}}+1-t \cr
  \end{aligned}
$$
$$
  \begin{aligned}
  \genm213&=4\,t\,{e^{t}}-4\,t+3\,{t}^{2}-{t}^{3}-{t}^{2}{e^{t}}-2\,{e^{t}}+2 \cr
  \genm223&=t\, \left( -{e^{t}}-1+t \right)  \cr
  \genm233&={e^{t}} \left( -2\,{e^{t}}+2-2\,{t}^{2}+{t}^{3} \right)  \cr
  \genm243&=t\, \left( -2+t \right) {e^{t}} \cr
  \end{aligned}
$$
$$
\begin{aligned}
\genm214&=t\, \left( -{e^{t}}-1+t \right) \cr
\genm224&=-{e^{t}}+1-t\cr
\genm234&=t\, \left( -2+t \right) {e^{t}}\cr
\genm244&={e^{t}} \left( -{e^{t}}+1-t+{t}^{2} \right)
\end{aligned}
$$
so
\begin{align*}
&\det\Lambda(t)=\cr
&{\frac { \left( {e^{t}} \right) ^{2}{t}^{4}\!-\!{e^{t}}{t}^{4}\!-\!4{t}^{3}{e^{t}}\!-\!4{t}^{3} \left( {e^{t}} \right) ^{2}\!+\!\!12{t}^{2} \left( {e^{t}} \right) ^{2}\!\!-\!12{e^{t}}{t}^{2}\!-\!12{e^{t}}\!+\!12 \left( {e^{t}} \right) ^{2}\!-\!4 \left( {e^{t}} \right) ^{3}\!\!+\!4}{-{e^{t}}+1-2t+{t}^{2}}}
\end{align*}
\begin{equation}= {\frac {1}{6480}}{t}^{8}+{\frac {1}{3888}}{t}^{9}+{\frac {869}{4082400}}{t}^{10}+{\frac {37}{326592}}{t}^{11}+{\frac {1213}{29393280}}{t}^{12}+\bigoo{13} 
\label{detexpan}
\end{equation}
\begin{align}
\label{detaexpan}
\det A(t)&=e^t-t^2+2t-1\cr
&=3t-\frac12t^2+\frac16t^3+{1\over24}t^4+\bigoo5
\end{align}

Now
\begin{align*}&\Lambda^{-1}(t)=\cr
&{1\over\!-\!{t}^{4}{e^{t}}\!+\!{t}^{4}{e^{2t}}\!-\!4{t}^{3}{e^{t}}\!-\!4{t}^{3}{e^{2t}}\!-\!12{t}^{2}{e^{t}}\!+\!12{t}^{2}{e^{2t}}\!-\!4{e^{3t}}\!\!-\!12{e^{t}}\!\!+\!12{e^{2t}}\!+\!4}\!\cdot\!\genm3{}{}\end{align*}
where
\begin{align*}
\genm311&={\scriptstyle {t}^{3} \left( {e^{t}} \right) ^{2}-{t}^{2} \left( {e^{t }} \right) ^{2}-{e^{t}}{t}^{2}+4\,t\, \left( {e^{t}} \right) ^{2}-4\,t\,{e^{t}}-2 \, \left( {e^{t}} \right) ^{3}+4\, \left( {e^{t}} \right) ^{2}-2\,{e^{t}} } \cr
\genm321&=-{t}^{3}{e^{t}}-{t}^{3} \left( {e^{t}} \right) ^{2}+2\,{t}^{2} \left( {e^{t}} \right) ^{2}-2\,{e^{t}}{t}^{2}\cr
\genm331&={t}^{3}{e^{t}}+{ t}^{2} \left( {e^{t}} \right) ^{2}+{e^{t}}{t}^{2}+4\,t\,{e^{t}}-4\,t\, \left( {e^{ t}} \right) ^{2}+2\, \left( {e^{t}} \right) ^{2}-4\,{e^{t}}+2\cr
\genm341&=- 2\,{t}^{3}{e^{t}}+2\,t\, \left( {e^{t}} \right) ^{2}-4\,t\,{e^{t}}+2\,t \cr
\genm312&=-{t}^{3}{e^{t}}-{t}^{3} \left( {e^{t}} \right) ^{2}+2\,{ t}^{2} \left( {e^{t}} \right) ^{2}-2\,{e^{t}}{t}^{2}\cr
\genm322&=\scriptstyle \left( {e ^{t}} \right) ^{2}{t}^{4}-{e^{t}}{t}^{4}-4\,{t}^{3} \left( {e^{t}} \right) ^{2}-2 \,{e^{t}}{t}^{2}+10\,{t}^{2} \left( {e^{t}} \right) ^{2}-4\,t\, \left( {e^{t}} \right) ^{2}+4\,t\,{e^{t}}-4\, \left( {e^{t}} \right) ^{3}+8\, \left( {e^{t}} \right) ^{2}-4\,{e^{t}}\cr
\genm332&=-2\,{t}^{3}{e^{t}}+2\,t\, \left( {e^{t }} \right) ^{2}-4\,t\,{e^{t}}+2\,t\cr
\genm342&=4\,{t}^{3}{e^{t}}+2\,{t}^{2} -10\,{e^{t}}{t}^{2}+4\,t\,{e^{t}}-4\,t+4\, \left( {e^{t}} \right) ^{2}-8\,{e^{t}}+ 4 \cr
\genm313&={t}^{3}{e^{t}}+{t}^{2} \left( {e^{t}} \right) ^{2}+{e^{t }}{t}^{2}+4\,t\,{e^{t}}-4\,t\, \left( {e^{t}} \right) ^{2}+2\, \left( {e^{t}} \right) ^{2}-4\,{e^{t}}+2\cr
\genm323&=-2\,{t}^{3}{e^{t}}+2\,t\, \left( {e^ {t}} \right) ^{2}-4\,t\,{e^{t}}+2\,t\cr
\genm333&={t}^{3}{e^{t}}-{e^{t}}{t}^ {2}-{t}^{2}+4\,t\,{e^{t}}-4\,t-2\, \left( {e^{t}} \right) ^{2}+4\,{e^{t}}-2 \cr
\genm343&=-{t}^{3}-{t}^{3}{e^{t}}+2\,{e^{t}}{t}^{2}-2\,{t}^{2} \cr
\genm314&=-2\,{t}^{3}{e^{t}}+2\,t\, \left( {e^{t}} \right) ^{2}-4 \,t\,{e^{t}}+2\,t\cr
\genm324&=4\,{t}^{3}{e^{t}}+2\,{t}^{2}-10\,{e^{t}}{t}^{ 2}+4\,t\,{e^{t}}-4\,t+4\, \left( {e^{t}} \right) ^{2}-8\,{e^{t}}+4 \cr
\genm334&=-{t}^{3}-{t}^{3}{e^{t}}+2\,{e^{t}}{t}^{2}-2\,{t}^{2} \cr
\genm344&={e^{t}}{t}^{4}-{t}^{4}-4\,{t}^{3}{e^{t}}-2\,{t}^{2}+10\,{e^{t} }{t}^{2}-4\,t\,{e^{t}}+4\,t-4\, \left( {e^{t}} \right) ^{2}+8\,{e^{t}}-4
\end{align*}
so
\goodbreak
\begin{equation}
\Lambda^{-1}=t^{-5}\cdot Y(t)
\end{equation}
where
$$
\begin{aligned}
\yy11&=30  {t}^{2}+9  {t}^{3}+\bigoo4  \cr
\yy21&=360  t-{\frac {18}{7}}{t}^{3}+\bigoo4 \cr
\yy31&=-30  {t}^{2}+9  {t}^{3}+\bigoo4 \cr
\yy41&=-360  t+180  {t}^{2}-{\frac {318}{7}}{t}^{3}+\bigoo4 \cr
\end{aligned}
$$
$$
\begin{aligned}
\yy12&=360  t-{\frac {18}{7}}{t}^{3}+\bigoo4  \cr
\yy22&=4320-1080  t+{\frac {960}{7}}{t}^{2}-{\frac {72}{7}}{t}^{3}+\bigoo4 \cr
\yy32&=-360  t+180  {t}^{2}-{\frac {318}{7}}{t}^{3}+\bigoo4 \cr
\yy42&=-4320+3240  t-{\frac {8520}{7}}{t}^{2}+{\frac {2148}{7}}{t}^{3}+\bigoo4 \cr
\end{aligned}
$$
$$
\begin{aligned}
\yy13&=-30  {t}^{2}+9  {t}^{3}+\bigoo4  \cr
\yy23&=-360  t+180  {t}^{2}-{\frac {318}{7}}{t}^{3}+\bigoo4 \cr
\yy33&=30  {t}^{2}-21  {t}^{3}+\bigoo4 \cr
\yy43&=360  t-360  {t}^{2}+{\frac {1242}{7}}{t}^{3}+\bigoo4 \cr
\end{aligned}
$$
$$
\begin{aligned}
\yy14&=-360  t+180  {t}^{2}-{\frac {318}{7}}{t}^{3}+\bigoo4 \cr
\yy24&=-4320+3240  t-{\frac {8520}{7}}{t}^{2}+{\frac {2148}{7}}{t}^{3}+\bigoo4\cr
\yy34&=360  t-360  {t}^{2}+{\frac {1242}{7}}{t}^{3}+\bigoo4\cr
\yy44&=4320-5400  t+{\frac {23640}{7}}{t}^{2}-{\frac {9852}{7}}{t}^{3}+\bigoo4 
\end{aligned}
$$
Note
\begin{equation}
\lim_{t\to0^+}Y(t)=\bracketmatrix{0&0&0&0\cr 0&4320&0&-4320\cr 0&0&0&0\cr 0&-4320&0&4320}
\end{equation}
Now the original integral can be estimated by estimating the diagonalization of $Y(t)$.  Though the proof does not depend on knowing the origin of the $U(t)$ and $D(t)$ used to approximate diagonalizing $Y(t)$, their construction is given in the appendix of the arxiv version of this paper \cite{arxiv_version} which also includes associated maple code and output.
\begin{align*}
  &J(t):=\cr
&\phantom=\lefty\intcf\left||\h1|^2-|x_1|^2\right|\cdot\left||\h2|^2-|x_2|^2\right|\etothelt\,dv \cr
  &=\lefty\intcf\left||\h1|^2-|x_1|^2\right|\cdot\left||\h2|^2-|x_2|^2\right|e^{-t^{-5}\langle Y(t)v,v\rangle}\,dv \cr
  &=\lefty\intcf\!\!\!\!\left||\h1|^2-|x_1|^2\right|\!\!\cdot\!\!\left||\h2|^2-|x_2|^2\right|e^{-\big\langle Y(t)(t^{-\frac52}v),(t^{-\frac52}v)\big\rangle} dv \cr
\end{align*}
Making the substitution $w=t^{-\frac52}v$ is actually saying
\begin{align*}
&\phantom\implies\bracketmatrix{w_1\cr w_2\cr w_3\cr w_4}=t^{-\frac52}\bracketmatrix{h_1\cr x_1\cr h_2\cr x_2}\cr
&\implies
\bracketmatrix{\dform w1\cr\dform w2\cr\dform w3\cr\dform w4}=t^{-\frac52}\bracketmatrix{\dform h1\cr\dform x1\cr\dform h2\cr\dform x2}
\cr
&\implies
dw=\frac i2\dform w1\dwb_1\dots\frac i2\dform w4\dwb_4=t^{-20}\frac i2\dform h1 d\bar h_1\dots\frac i2\dform x4 d\bar x_4=t^{-20}dv
\end{align*}
making
\begin{align*}
  &J(t)=\cr
  &\lefty\intcf\left||\h1|^2-|x_1|^2\right|\cdot\left||\h2|^2-|x_2|^2\right|e^{-\angleenclose{ Y(t)(t^{-\frac52}v),(t^{-\frac52}v)}}dv \cr
  &=\!\lefty\!\intcf\!\!\!t^5\!\left||w_1|^2-|w_2|^2\right|\cdot t^5\left||w_3|^2-|w_4|^2\right|\!e^{-\angleenclose{Y(t)w,w}}t^{20}dw \cr
  &=\!\sidey\intcf \left||w_1|^2-|w_2|^2\right|\cdot \left||w_3|^2-|w_4|^2\right|e^{-\angleenclose{Y(t)w,w}}\,dw \cr
\end{align*}
Now
\begin{align*}
Y(t)&=U(t)^*D(t)U(t)\cr
&=\bigg(\tilde U(t)^*+\oaf3^*\bigg)\bigg(\tilde D(t)+\daf{12}\bigg)\bigg(\tilde U(t)+\oaf3\bigg) \cr
&=\tilde U(t)^*\tilde D(t)\tilde U(t)+\oaf3
\end{align*}
where
\begin{align*}
D(t):&=\bracketmatrix{\lll1&0&0&0\cr0&\lll2&0&0\cr0&0&\lll3&0\cr0&0&0&\lll4}\cr
&=
\underbrace{\bracketmatrix{\lt1&0&0&0\cr0&\lt2&0&0\cr0&0&\lt3&0\cr0&0&0&\lt4}}_{\displaystyle\tilde D(t)}+\underbrace{\omat{12}}_{\displaystyle\daf{12}}
\end{align*}
where
\begin{align*}
\lt1:=&8640-6480\,t+{\frac {25020}{7}}\,{t}^{2}-{\frac {10050}{7}}\,{t}^{3}+{\frac {66380}{147}}\,{t}^{4}-{\frac {261767}{2352}}\,{t}^{5}\cr
&+{\frac {48960935}{2173248}}\,{t}^{6}-{\frac {29628553}{8149680}}\,{t}^{7}
+{\frac {208429618963}{427173626880}}\,{t}^{8}\cr&-{\frac {560822276587}{8543472537600}}\,{t}^{9}+{\frac {46335059891}{6133775155200}}\,{t}^{10}\cr&+{\frac {518190034231}{1794129232896000}}\,{t}^{11}  \cr
\lt2:=&6\,{t}^{3}-3\,{t}^{4}+{\frac {111}{80}}\,{t}^{5}-{\frac {161}{960}}\,{t}^{6}-{\frac {20561}{1209600}}\,{t}^{7}+{\frac {561019}{21772800}}\,{t}^{8}\cr&+{\frac {3916753}{15676416000}}\,{t}^{9}
-{\frac {827998967}{282175488000}}\,{t}^{10}\cr&+{\frac {5185091420987}{15643809054720000}}\,{t}^{11}  \cr
\lt3:=&\frac13\,{t}^{4}-{1\over12}\,{t}^{5}+{\frac {1}{72}}\,{t}^{6}+{1\over32}\,{t}^{7}+{\frac {6223}{207360}}\,{t}^{8}+{\frac {256685}{8957952}}\,{t}^{9}\cr&+{\frac {588107563}{22574039040}}\,{t}^{10}+{\frac {6399891227}{325066162176}}\,{t}^{11}  \cr
\lt4:=&{3\over8}\,{t}^{5}-{1\over16}\,{t}^{6}-{\frac {65}{768}}\,{t}^{7}-{\frac {101}{3072}}\,{t}^{8}-{\frac {877}{40960}}\,{t}^{9}\cr&-{\frac {37303}{1474560}}\,{t}^{10}-{\frac {2563021}{123863040}}\,{t}^{11}  \cr
\end{align*}
and
\begin{equation}
U(t)=\tilde U(t)+\underbrace{\allomat3}_{\displaystyle\oaf3}
\end{equation}
where
$$
\begin{aligned}
\tilde u_{11}=& -{\sqrt2\over24}t-{\sqrt2\over48}t^2  \cr
\tilde u_{21}=& -{\frac {1}{2}}-{\frac {3}{16}}t-{\frac {65}{2304}}{t}^{2}    \cr
\tilde u_{31}=& {\sqrt2\over2}+0t+{43\sqrt2\over576}t^2  \cr
\tilde u_{41}=& {\frac {1}{2}}-{\frac {3}{16}}t-{\frac {193}{768}}{t}^{2}    \cr
\end{aligned}
$$
$$
\begin{aligned}
\tilde u_{12}=&  -{\sqrt2\over2}-{\sqrt2\over8}t+{5\sqrt2\over288}t^2  \cr
\tilde u_{22}=& {\frac {1}{2}}-{\frac {3}{16}}t+{\frac {5}{256}}{t}^{2}    \cr
\tilde u_{32}=&{\sqrt2\over12}+0t+{61\sqrt2\over576}t^2  \cr
\tilde u_{42}=& {\frac {1}{2}}-{\frac {1}{16}}t-{\frac {53}{768}}{t}^{2}    \cr
\end{aligned}
$$
$$
\begin{aligned}
\tilde u_{13}=&{\sqrt2\over24}t+0t^2  \cr
\tilde u_{23}=&  -{\frac {1}{2}}+{\frac {1}{16}}t+{\frac {167}{2304}}{t}^{2}    \cr
\tilde u_{33}=& -{\sqrt2\over2}+0t+{7\sqrt2\over64}t^2  \cr
\tilde u_{43}=&  {\frac {1}{2}}+{\frac {1}{16}}t+{\frac {215}{768}}{t}^{2}    \cr
\end{aligned}
$$
$$
\begin{aligned}
\tilde u_{14}=& {\sqrt2\over2}-{\sqrt2\over8}t-{5\sqrt2\over288}t^2 \cr
\tilde u_{24}=&  {\frac {1}{2}}+{\frac {1}{16}}t-{\frac {41}{768}}{t}^{2} \cr
\tilde u_{34}=& {\sqrt2\over6}t+{109\sqrt2\over576}t^2 \cr
\tilde u_{44}=&  {\frac {1}{2}}+{\frac {3}{16}}t-{\frac {29}{768}}{t}^{2}  \cr
\end{aligned}
$$
and $\oaf3$ taken so that $U(t)$ is invertible.

The following lemmas say that $U(t)$ is approximately orthogonal.

\begin{lemma}\label{uustarI} $U(t)U(t)^*=I+\oaf3$\end{lemma}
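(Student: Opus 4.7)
The plan is to reduce the statement to a purely polynomial computation involving only $\tilde U(t)$. Write $U(t)=\tilde U(t)+E(t)$ with $E(t)\in\oaf3$ entry-wise, and expand
\[
U(t)U(t)^* = \tilde U(t)\tilde U(t)^* + \tilde U(t)E(t)^* + E(t)\tilde U(t)^* + E(t)E(t)^*.
\]
Every entry of $\tilde U(t)$ displayed above is a polynomial in $t$, hence $O(1)$ as $t\to 0^+$, so any product of such an $O(1)$ matrix with an $\oaf3$ matrix is again $\oaf3$, and $E(t)E(t)^*\in\oaf6\subset\oaf3$. Hence the last three terms sit in $\oaf3$, and the lemma reduces to the unperturbed statement $\tilde U(t)\tilde U(t)^*=I+\oaf3$.

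For this reduced statement, observe that every $\tilde u_{ij}(t)$ is real, so $\tilde U(t)^*=\tilde U(t)^T$ and the $(i,j)$-entry of $\tilde U(t)\tilde U(t)^*$ is $\sum_{k=1}^{4}\tilde u_{ik}(t)\,\tilde u_{jk}(t)$. Each $\tilde u_{ij}(t)$ is a polynomial of degree at most $2$, so each of these sums is a polynomial of degree at most $4$ in $t$. To conclude the lemma it is therefore enough to check that, for every pair $(i,j)$, the coefficients of $t^0$, $t^1$, and $t^2$ in the sum equal $\delta_{ij}$, $0$, and $0$ respectively; the $t^3$ and $t^4$ remainders are absorbed into $\oaf3$ automatically.

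What remains is a direct verification of ten scalar identities (the four diagonal and six upper off-diagonal entries of $\tilde U\tilde U^*$, the other six following by the symmetry $(\tilde U\tilde U^*)_{ji}=(\tilde U\tilde U^*)_{ij}$). I would organize this by tabulating, for each column of $\tilde U(t)$, the three vectors of coefficients (constant term, linear-in-$t$ term, quadratic-in-$t$ term) and then computing the pairwise inner products of these column vectors at each order. The main obstacle is purely bookkeeping: the entries mix rational coefficients with $\sqrt 2$ factors, so one must keep track of which cross terms produce rational contributions, which produce $\sqrt 2$ contributions, and verify that the $\sqrt 2$ contributions cancel in each off-diagonal entry while the rational contributions combine to $\delta_{ij}$ on the diagonal. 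No conceptual difficulty is expected, because $\tilde U(t)$ was constructed (in the appendix of the arXiv version) precisely so as to be an orthogonal approximate diagonalizer of $Y(t)$ accurate to order $t^2$; the lemma is a direct consequence of that construction, and the proof is essentially the check that the construction does what was claimed.
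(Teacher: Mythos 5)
Your approach is the paper's exactly: expand $U(t)U(t)^*=(\tilde U(t)+\oaf3)(\tilde U(t)^*+\oaf3^*)$, absorb the cross and quadratic error terms into $\oaf3$, and reduce the lemma to the finite polynomial statement $\tilde U(t)\tilde U(t)^*=I+\oaf3$. The paper closes by explicitly displaying the $4\times4$ product $\tilde U(t)\tilde U(t)^*$, whose entries are $\delta_{ij}$ plus terms in $t^3$ and $t^4$; you describe this ten-entry verification and its organization correctly but leave it unperformed, instead appealing to the intent of $\tilde U$'s construction, so to fully establish the lemma you should actually carry out (or cite) that computation rather than argue from intent, since the lemma is precisely the check that the construction succeeds.
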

\begin{proof}
\begin{align*}
&U(t)U(t)^*\cr
&=\bigg(\tilde U(t)+\oaf3\bigg)\bigg(\tilde U(t)+\oaf3\bigg)^* \cr
&=\bigg(\ut+\oaf3\bigg)\bigg(\ut^*+\oaf3^*\bigg) \cr
&=\ut\ut^*+\oaf3\ut^*+\ut\oaf3+\oaf3\oaf3 \cr
&=\ut\ut^*+\oaf3+\oaf3+\oaf6 \cr
&=\ut\ut^*+\oaf3 \cr
&=\genm\mn{}{}+\oaf3 \cr
\end{align*}
where

\[
\begin{aligned}
\genm\mn11&=1+{\frac {1}{288}}\,{t}^{3}+{\frac {43}{20736}}\,{t}^{4} \cr
\genm\mn21&={\frac {221}{27648}}\,\sqrt {2}{t}^{3}+{\frac {205}{110592}}\,\sqrt {2}{t}^{4} \cr
\genm\mn31&=-{\frac {85}{1152}}\,{t}^{3}-{\frac {83}{13824}}\,{t}^{4} \cr
\genm\mn41&={\frac {323}{9216}}\,\sqrt {2}{t}^{3}+{\frac {173}{36864}}\,\sqrt {2}{t}^{4}
\end{aligned}
\]\[
\begin{aligned}
\genm\mn12&={\frac {221}{27648}}\,\sqrt {2}{t}^{3}+{\frac {205}{110592}}\,\sqrt {2}{t}^{4} \cr
\genm\mn22&=1+{\frac {13}{2304}}\,{t}^{3}+{\frac {12317}{1327104}}\,{t}^{4} \cr
\genm\mn32&=-{\frac {23}{1024}}\,\sqrt {2}{t}^{3}-{\frac {367}{165888}}\,\sqrt {2}{t}^{4} \cr
\genm\mn42&={\frac {85}{1152}}\,{t}^{3}+{\frac {517}{18432}}\,{t}^{4} \cr
\end{aligned}
\]
\[
\begin{aligned}
\genm\mn13&=-{\frac {85}{1152}}\,{t}^{3}-{\frac {83}{13824}}\,{t}^{4} \cr
\genm\mn23&=-{\frac {23}{1024}}\,\sqrt {2}{t}^{3}-{\frac {367}{165888}}\,\sqrt {2}{t}^{4} \cr
\genm\mn33&=1+{\frac {31}{192}}\,{t}^{3}+{\frac {595}{4608}}\,{t}^{4}\cr
\genm\mn43&={\frac {89}{9216}}\,\sqrt {2}{t}^{3}-{\frac {287}{110592}}\,\sqrt {2}{t}^{4} \cr
\end{aligned}
\]
\[
\begin{aligned}
\genm\mn14&={\frac {323}{9216}}\,\sqrt {2}{t}^{3}+{\frac {173}{36864}}\,\sqrt {2}{t}^{4} \cr
\genm\mn24&={\frac {85}{1152}}\,{t}^{3}+{\frac {517}{18432}}\,{t}^{4} \cr
\genm\mn34&={\frac {89}{9216}}\,\sqrt {2}{t}^{3}-{\frac {287}{110592}}\,\sqrt {2}{t}^{4} \cr
\genm\mn44&=1+{\frac {95}{768}}\,{t}^{3}+{\frac {21781}{147456}}\,{t}^{4} \cr
\end{aligned}
\]
so $$U(t)U(t)^*=\genm\mn{}{}+\oaf3=I+\oaf3$$\hfill\qed
\end{proof}

\begin{lemma}\label{uinvu}$U(t)^{-1}=U(t)^*+\oaf3$\end{lemma}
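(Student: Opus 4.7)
The plan is to deduce this lemma directly from Lemma \ref{uustarI} by left-multiplying by $U(t)^{-1}$, provided we control the size of $U(t)^{-1}$ near $t=0$. Concretely, Lemma \ref{uustarI} gives
\[
 U(t)U(t)^* = I + \oaf3,
\]
and (by hypothesis, since $\oaf3$ above was chosen to keep $U(t)$ invertible) we may apply $U(t)^{-1}$ on the left to obtain
\[
 U(t)^* = U(t)^{-1} + U(t)^{-1}\,\oaf3.
\]
Rearranging, $U(t)^{-1} = U(t)^* - U(t)^{-1}\,\oaf3$, so the task reduces to showing $U(t)^{-1}\,\oaf3 = \oaf3$.

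To establish this, I would argue that the entries of $U(t)^{-1}$ are bounded as $t\searrow 0$, so that multiplying an $\oaf3$ matrix on the left by $U(t)^{-1}$ keeps each entry $O(t^3)$. Boundedness of $U(t)^{-1}$ near $0$ follows from the same lemma: taking determinants in $U(t)U(t)^* = I + \oaf3$ yields $|\det U(t)|^2 = 1 + \bigoo3$, so $\det U(t)$ stays bounded away from $0$ on some interval $(0,\delta)$. Since each entry of $U(t)$ is a polynomial in $t$ (hence bounded on $(0,\delta)$), Cramer's rule shows every entry of $U(t)^{-1}$ is bounded there as well.

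Combining, for each pair of indices $(i,j)$, $(U(t)^{-1}\,\oaf3)_{ij}$ is a sum of products of a bounded function with an $O(t^3)$ function, hence itself $O(t^3)$. Therefore $U(t)^{-1}\,\oaf3 = \oaf3$, and substituting back gives
\[
 U(t)^{-1} = U(t)^* + \oaf3,
\]
as claimed. The only mild subtlety is the invertibility of $U(t)$ on a full neighborhood of $0$ (so that $U(t)^{-1}$ is defined for all $t$ in the range where we want the estimate); this was arranged in the construction of the $\oaf3$ correction used to define $U(t)$, so no new work is required there.
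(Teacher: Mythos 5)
Your proposal is correct, and it branches from the paper's proof at an interesting point. Both proofs open identically: left-multiplying $U(t)U(t)^* = I + \oaf3$ by $U(t)^{-1}$. The paper then writes $U(t)^{-1} = U(t)^*(I+\oaf3)^{-1}$ and inverts the factor $(I+\oaf3)$ via the Neumann series $I-\oaf3+\oaf3^2-\cdots = I+\oaf3$; the only boundedness needed afterwards is that of $U(t)^*$, which is immediate because its entries are explicitly given power series with finite limit at $t=0$. You instead rearrange to $U(t)^{-1} = U(t)^* - U(t)^{-1}\oaf3$ and absorb the residual by arguing $U(t)^{-1}$ is bounded near $0$, which you establish via $|\det U(t)|^2 = 1+\bigoo3$ and Cramer's rule. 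Your route avoids the infinite series and is, in that sense, more elementary, at the cost of an extra step bounding $U(t)^{-1}$ that the paper never needs. One small imprecision: the entries of $U(t)=\ut+\oaf3$ are not literally polynomials in $t$ (only the $\ut$ part is), but they are still bounded near $0$ because the $\oaf3$ correction vanishes there, so your Cramer's rule argument goes through unchanged.
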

\begin{proof}
\begin{align*}
  &U(t)U(t)^*=I+\oaf3 \cr
  \implies\qquad&U(t)^*=U(t)^{-1}\left(I+\oaf3\right) \cr
  \implies\qquad&U(t)^{-1}=U(t)^*\left(I+\oaf3\right)^{-1} \cr
\end{align*}
since $\oaf3^n\as {n\to\infty}{40} \bracketmatrix0_{4\times4}$ for small $t$,
\begin{align*}
\bigg(I+&\oaf3\bigg)^{-1}\cr
&=I-\oaf3+\oaf3^2-\oaf3^3\pm\dots=I+\oaf3
\end{align*}
so
\begin{align*}
&U(t)^{-1}=U(t)^*\left(I+\oaf3\right) \cr
\implies\qquad&U(t)^{-1}=U(t)^*+U(t)^*\oaf3 \cr
\implies\qquad&U(t)^{-1}=U(t)^*+\oaf3 \cr
\end{align*}
\hfill\qed
\end{proof}

\bigskip
Now
\begin{align*}
&J(t)=\cr&\sidey\!\intcf\!\left||w_1|^2-|w_2|^2\right|\!\!\cdot\!\!\left||w_3|^2-|w_4|^2\right|e^{-\angleenclose{U(t)^*D(t)U(t)w,w}}\,dw \cr
&=\!\sidey\!\intcf\!\left||w_1|^2-|w_2|^2\right|\!\!\cdot\!\!\left||w_3|^2-|w_4|^2\right|e^{-\angleenclose{D(t)U(t)w,U(t)w}}dw \cr
\end{align*}
Make the substitution
\begin{align*}
z&:=U(t)w\cr
z_i&=\sum_j\uu ijw_j \cr
w_i&=\sum_ju^{ij}z_j=\sum_j\big(\uu ji+\bigoo3\big)z_j \cr
dw&=\det(U(t))\cdot dz=\big(1+\bigoo3\big)\,dz=\bigg[dz+\bigoo3\,dz\bigg]
\end{align*}
In the following, only the properties of the $t^0$, $t^1$, and $t^2$ terms of the $u_{ji}$ are used so, for the sake of readability, ``$u_{ji}$'' will always be written in place of ``$\,u_{ji}+\bigoo3$''.

\begin{align}
\label{seereal}
&J(t)= \cr
&\sidey\intcf \left|\lsw1-\lsw2\right|\cdot \left|\lsw3-\lsw4\right|e^{-\angleenclose{D(t)U(t)w,U(t)w}}\,dw \cr
\end{align}
\begin{align*}
  &=\sidey\intcf {\scriptstyle\left|\usw1-\usw2\right|}\cr
&\hskip11em\cdot {\scriptstyle\left|\usw3-\usw4\right|}e^{-\angleenclose{D(t)z,z}}\,{\scriptstyle(dz+\bigoo3\,dz)} \cr
  &=\sideshort\intcf {\scriptstyle\left|\usw1-\usw2\right|}\cr
&\hskip11em\cdot \left|\scriptstyle\usw3-\usw4\right|\eeeehack6\,dz \cr
  &=\sideshort\intcf {\scriptstyle\left|\usw1-\usw2\right|}\cr
&\hskip11em\cdot \left|\scriptstyle\usw3-\usw4\right|\eeeehack8\,dz \cr
\end{align*}
Make the substitution
\begin{equation*}
w_j:=\sqrt{\lll j}z_j\quad\Longrightarrow\quad z_j={w_j\over\sqrt{\lll j}}\quad\hbox{and}\quad dw_j=\sqrt{\lll j}\,dz_j
\end{equation*}
so
\begin{align*}
dz&=\frac i2dz_1\w\dzb_1\w\dots\w\frac i2 dz_4\w\dzb_4=\frac i2{dw_1\over\sqrt{\lll1}}\w{\dwb_1\over\sqrt{\lll1}}\w\dots\w\frac i2{dw_4\over\sqrt{\lll4}}\w{\dwb_4\over\sqrt{\lll4}} \cr
&={1\over\lll1\lll2\lll3\lll4}\frac i2 dw_1\w\dwb_1\w\dots\w\frac i2dw_4\w\dwb_4 \cr
&={dw\over\lll1\lll2\lll3\lll4}
\end{align*}
so, since Lemma \ref{uustarI} and (\ref{detexpan}) imply
$$
[\det\Lambda(t)]\lll1\lll2\lll3\lll4=t^{20}+\bigoo{23}
$$
\begin{align*}
&J(t)=\cr
&\leftyyy\!\!\intcf\!{\scriptstyle\left|\usww1\!\!\!-\usww2\right|}\cr
&\hskip11em\cdot{\scriptstyle\left|\usww3\!\!\!-\usww4\right|}\eeeehack9\!dw \cr
&={t^{30}+\bigoo{33}\over\scriptstyle\pi^6\!\det\!A[t^{20}+\bigoo{23}]}\intcf {\scriptstyle \left| \usww1-\usww2 \right|}\cr
&\hskip11em\cdot   {\scriptstyle \left| \usww3-\usww4 \right|}\eeeehack4\,dw \cr
&={t^{10}+\bigoo{13}\over\scriptstyle\pi^6\det A(1+\bigoo3)}\intcf {\scriptstyle \left| \usww1-\usww2 \right|}\cr
&\hskip11em\cdot   {\scriptstyle \left| \usww3-\usww4 \right|}\eeeehack4\,dw \cr
&={t^{10}+\bigoo{13}\over\pi^6\det A(t)}\intcf {\scriptstyle \left| \usww1-\usww2 \right|}\cr
&\hskip11em\cdot   {\scriptstyle \left| \usww3-\usww4 \right|}\eeeehack4\,dw \cr
&={1+\bigoo3\over\pi^6\det A(t)}\intcf t^5{\scriptstyle \left| \usww1-\usww2 \right|}\cr
&\hskip11em\cdot   t^5{\scriptstyle \left| \usww3-\usww4 \right|}\eeeehack4\,dw \cr
&={1+\bigoo3\over\pi^6\det A(t)}\intcf {\scriptstyle \left| \uswww1-\uswww2 \right|}\cr
&\hskip11em\cdot   {\scriptstyle \left| \uswww3-\uswww4 \right|}e^{-\angleenclose{w,w}}\,dw\cr
\end{align*}

Now
$$
\begin{aligned}
&\brac11=-{\sqrt{30}\over8640}{t}^{\frac72}-{7\sqrt{30}\over69120}t^{\frac92}+\bigoo{11\over2} \cr
&\brac12=-{\sqrt{30}\over720}t^\frac52-{\sqrt{30}\over1152}t^\frac72+\bigoo{\frac92} \cr
&\brac13={\sqrt{30}\over8640}t^\frac72+{\sqrt{30}\over23040}t^\frac92+\bigoo{11\over2} \cr
&\brac14={\sqrt{30}\over720}t^\frac52+{\sqrt{30}\over5760}t^\frac72+\bigoo{\frac92} \cr
\end{aligned}
$$
$$
\begin{aligned}
&\brac21=-{\sqrt6\over12}t-{5\sqrt6\over96}t^2+\bigoo3\cr
&\brac22={\sqrt6\over12}t-{\sqrt6\over96}t^2+\bigoo3 \cr
&\brac23=-{\sqrt6\over12}t-{\sqrt6\over96}t^2+\bigoo3 \cr
&\brac24={\sqrt6\over12}t+{\sqrt6\over32}t^2+\bigoo3 \cr
\end{aligned}
$$
and
$$
\begin{aligned}
&\brac31={\sqrt6\over2}t^\frac12+{\sqrt6\over16}t^\frac32+\bigoo{\frac52} \cr
&\brac32={\sqrt6\over12}t^\frac32+{67\sqrt6\over576}t^\frac52+\bigoo{\frac72} \cr
&\brac33=-{\sqrt6\over2}t^\frac12-{\sqrt6\over16}t^\frac32+\bigoo{\frac52} \cr
&\brac34={\sqrt6\over6}t^\frac32+{121\sqrt6\over576}t^\frac52+\bigoo{\frac72} \cr
\end{aligned}
$$
$$
\begin{aligned}
&\brac41={\sqrt6\over3}-{7\sqrt6\over72}t-{473\sqrt6\over3456}t^2+\bigoo3 \cr
&\brac42={\sqrt6\over3}-{\sqrt6\over72}t-{29\sqrt6\over3456}t^2+\bigoo3\cr
&\brac43={\sqrt6\over3}+{5\sqrt6\over72}t+{799\sqrt6\over3456}t^2+\bigoo3 \cr
&\brac44={\sqrt6\over3}+{11\sqrt6\over72}t+{91\sqrt6\over3456}t^2+\bigoo3 \cr
\end{aligned}
$$
\newcommand{\roo}[2]{{\sqrt{#1}\over#2}}
\newcommand{\tto}[2]{t^{#1\over#2}}
so
\begin{align*}
&J(t)=\cr
&{1+\bigoo3\over\pi^6\det A(t)}\intcf|\al\an_t(w)\al\an_t(\wb)-\be\an_t(w)\be\an_t(\wb)|\cr
&\hskip11em\cdot|\ga\an_t(w)\ga\an_t(\wb)-\de\an_t(w)\de\an_t(\wb)|\cdot e^{-\angleenclose{w,w}}\,dw
\end{align*}
where
\begin{align*}
\al\an_t(w)=&\phantom+\bi72w_1\cr
&+\left( -{\sqrt6\over12}t-{5\sqrt6\over96}t^2+\bigoo3\right)w_2\cr
&+\left( \roo62t^\frac12+\roo6{16}t^\frac32+\bi52\right)w_3\cr
&+\left( \roo63-{7\sqrt6\over72}t-{473\sqrt6\over3456}t^2+\bigoo3\right)w_4\cr
\end{align*}\begin{align*}
\be\an_t(w)=&\phantom+\bi52w_1\cr
&+\left( \roo6{12}t-\roo6{96}t^2+\bigoo3\right)w_2\cr
&+\left( \roo6{12}t^\frac32+\bi52\right)w_3\cr
&+\left( \roo63-\roo6{72}t-{29\sqrt6\over3456}t^2+\bigoo3\right)w_4\cr
\end{align*}\begin{align*}
\ga\an_t(w)=&\phantom+\bi72w_1\cr
&+\left( -\roo6{12}t-\roo6{96}t^2+\bigoo3\right)w_2\cr
&+\left( -\roo62t^\frac12-\roo6{16}t^\frac32+\bi52\right)w_3\cr
&+\left( \roo63+{5\sqrt6\over72}t+{799\sqrt6\over3456}t^2+\bigoo3\right)w_4\cr
\end{align*}\begin{align*}
\de\an_t(w)=&\phantom+\bi52w_1\cr
&+\left( \roo6{12}t+\roo6{32}t^2+\bigoo3\right)w_2 \cr
&+\left( \roo66t^\frac32+\bi52\right)w_3 \cr
&+\left( \roo63+{11\sqrt6\over72}t+{91\sqrt6\over3456}t^2+\bigoo3\right)w_4\cr
\end{align*}
\global\advance\ac by 1
So
\begin{align*}
&J(t)={1+\bigoo3\over\pi^6\det A}\!\!\intcf
\!\!\bigg|\al\an+\be\an\sqrt t+\ga\an t+\de\an t^\frac32+{\sume1}\bigg|\cr
&\hskip8em\cdot
\global\advance\ac by 1
\bigg|\al\an+\be\an\sqrt t+\ga\an t+\de\an t^\frac32+{\sume2}\bigg|
\,
{e e^{-\angleenclose{w,w}}}\,dw
\end{align*}
where
\begin{align*}
  \al2(t)&=\roo63w_4\roo63\wb_4-\roo63w_4\roo63\wb_4=0\cr
  &=\al3(t) \cr
\end{align*}\begin{align*}
  \be2(t)&=\roo62w_3\roo63\wb_4+\roo63w_4\roo62\wb_3=\left( {w_3}\wb_4 +\wb_3{w_4}   \right)=2\realpart(w_3\wb_4)\cr
\end{align*}\begin{align*}
  \be3(t)&=-\roo62w_3\roo63\wb_4+\left(-\roo62\right)\wb_3\roo63w_4=-\left( {w_3}\wb_4+\wb_3{w_4}   \right)\cr
&=-2\realpart(w_3\wb_4) =-\be2(t) \cr
\end{align*}\begin{align*}
  \ga2(t)&= -\roo6{12}w_2\roo63\wb_4+\roo62w_3\roo62\wb_3  +\roo63w_4\left(-\roo6{12}\right)\wb_2\cr
&\phantom{{}=}+\roo63w_4(-{7\sqrt6\over72})\wb_4  -{7\sqrt6\over72}w_4\roo63\wb_4 -\roo6{12}w_2\roo63\wb_4  \cr&\phantom{{}=}-\roo63w_4\roo6{12}\wb_2-\roo63w_4\left(-\roo6{72}\right)\wb_4  +\roo6{72}w_4\roo63\wb_4\cr
  &=  \frac16\left( -2{w_2}\wb_4 -2\wb_2{w_4} -2{w_4}\wb_4 + 9{w_3}\wb_3  \right)\cr
&=\frac16(-4\realpart(w_2\wb_4)+9|w_3|^2-2|w_4|^2)\cr
\end{align*}\begin{align*}
  \ga3(t)=& -\roo6{12}w_2\roo63\wb_4+(-\roo62)w_3(-\roo62)\wb_3 +\roo63w_4(-\roo6{12})\wb_2\cr
&+\roo63w_4{5\sqrt6\over72}\wb_4 +{5\sqrt6\over72}w_4\roo63\wb_4 -\roo6{12}w_2\roo63\wb_4 \cr
&-\roo63w_4\roo6{12}\wb_2-\roo63w_4{11\sqrt6\over72}\wb_4  -{11\sqrt6\over72}w_4\roo63\wb_4  \cr
  =&\frac16\left( -2{w_2}\,\wb_4 -2\wb_2w_4 -2{w_4}\,\wb_4 +9{w_3}\,\wb_3 \right)\cr
  =&\frac16(-4\realpart(w_2\wb_4)+9|w_3|^2-2|w_4|^2) \cr
  &=\ga2(t) \cr
\end{align*}\begin{align*}
  \de2(t)&=-{1\over12}(3\wb_2w_3+3w_2\wb_3+4\wb_3w_4+4w_3\wb_4)t^\frac32\cr
&=-{1\over12}(6\realpart(w_2\wb_3)+8\realpart(w_3\wb_4))\cr
\end{align*}\begin{align*}
  \de3(t)&={1\over12}(3w_2\wb_3+3\wb_2w_3-8w_3\wb_4-8\wb_3w_4)t^\frac32\cr
&={1\over12}(6\realpart(w_2\wb_3)-16\realpart(w_3\wb_4)) \cr
\end{align*}\begin{align*}
  {}^1\varepsilon_{jk}(t)&=\bigoo2 \cr
  {}^2\varepsilon_{jk}(t)&=\bigoo2 \cr
\end{align*}
notice $\al2,\al3,\be2,\be3,\ga2,\ga3,\de2,\de3\in\mathbb{R}$.  This implies that since
$$\bigg(\alpha_2+\beta_2\sqrt t+\gamma_2t+\delta_2t^\frac32+\sume1\bigg)\in\mathbb{R}$$
and
$$\bigg(\alpha_3+\beta_3\sqrt t+\gamma_3t+\delta_3t^\frac32+\sume2\bigg)\in\mathbb{R}$$
by
(\ref{seereal}),\quad$\bigg(\sume1\bigg)$ and $\bigg(\sume2\bigg)$ must be real, as well.
\goodbreak
so
\begin{align*}
&J(t)=\cr
&{1+\bigoo3\over\pi^6\det A(t)}\intcf|\be2\sqrt t+\ga2t+\de2t^\frac32+{\sume1}|\cr
&\hskip9em\cdot|-\be2\sqrt t+\ga2t+\de3t^\frac32+{\sume2}|\,{e^{-\angleenclose{w,w}}\,dw} \cr
=&{1+\bigoo3\over\pi^6\det A}   \intcf    \bigg|\kern-1.8em\smash{\overbrace{[{-\be2^2}]}^{\displaystyle\phantom{{}<0}\alpha_4(w)<0}}\kern-1.5em t+[0]t^\frac32+\overbrace{[{\be2\de3+\ga2^2-\de2\be2}]}^{\displaystyle\beta_4(w)}t^2\cr
&\hskip7em+\underbrace{\sume3}_{\displaystyle\SS3} + \underbrace{\sumew4}_{\displaystyle\SS4} \bigg|{e^{-\angleenclose{w,w}} dw} \cr
\end{align*}
where
\begin{align*}
\al4&=-(\be2)^2=-(2\realpart(w_3\wb_4))^2=-4\realpart(w_3\wb_4)^2 \cr
\end{align*}\begin{align*}
\be4&=\be2\de3+\ga2^2-\de2\be2 \cr
&={1\over6}\realpart(w_3\wb_4)\bigg(6\realpart(w_2\wb_3)-16\realpart(w_3\wb_4)\bigg) \cr
&\phantom{={}}+{1\over36}\bigg(-4\realpart(w_2\wb_4)+9|w_3|^2-2|w_4|^2\bigg)^2 \cr
&\phantom{={}}+{1\over6}\realpart(w_3\wb_4)\bigg(6\realpart(w_2\wb_3)+8\realpart(w_3\wb_4)\bigg) \cr
&= 2  \realpart  ( {w_3} {\wb_4}  )  \realpart  ( {w_2} {\wb_3}  ) -\frac43   (  \realpart  ( {w_3} {\wb_4}  )  ) ^{2}+\frac49   (  \realpart  ( {w_2} {\wb_4}  ) ) ^{2}\cr
&\phantom{={}}-2  \realpart  ( {w_2} {\wb_4} ) {w_3} {\wb_3} +\frac49   \realpart  ( {w_2}  {\wb_4}  ) {w_4} {\wb_4} +\frac94  {{w_3}}^{2}  {\wb_3^2}- {w_3} {\wb_3} {w_4} {\wb_4} \cr
&\phantom{={}} +\frac19  {{w_4}}^{2}  {\wb_4^2}   \cr
&=2\realpart(w_2\wb_3)\realpart(w_3\wb_4)-\frac43\realpart(w_3\wb_4)^2 \cr
&\phantom{{}=}+\frac19\realpart(w_2\wb_4)\bigg(4\realpart(w_2\wb_4)-18|w_3|^2+4|w_4|^2\bigg) \cr
&\phantom{={}}+\frac94|w_3|^4-|w_3|^2|w_4|^2+\frac19|w_4|^4 \cr
&=2\realpart(w_2\wb_3)\realpart(w_3\wb_4)-\frac43\realpart(w_3\wb_4)^2\cr
&\phantom{{}=}+\frac19\realpart(w_2\wb_4)\bigg(4\realpart(w_2\wb_4)-18|w_3|^2+4|w_4|^2\bigg) \cr &\phantom{={}}+{1\over36}{\bigg(9|w_3|^2-2|w_4|^2\bigg)^2} \cr
\end{align*}\begin{align*}
{}^3\varepsilon_{jk}(t)&=\bigoo{5\over2} \cr
{}^4\varepsilon_{jk\ell m}(t)&={}^1\varepsilon_{jk}(t)\cdot{}^2\varepsilon_{\ell m}(t)=\bigoo4 \cr
\end{align*}
Again, notice that $\al4(w)$, $\be4(w)$, $\SS3$, and $\SS4$ are all real.

So
$$
J(t)={1+\bigoo3\over\pi^6\det A(t)}\intcf\left|\al4t+\be4t^2+\SS3+\SS4\right|\,e^{-\angleenclose{w,w}}\,dw
$$

The following lemma will help take the error terms out of the absolute value.
\newcommand{\fwt}{f(w,t)}
\newcommand{\eww}{\e(t)g(w)}
\newcommand{\ww}{\left|g(w)\right|}
\begin{lemma}\label{shrink}  For any $n>1$, any continuous complex valued $\fwt$ and $g(w)$, and $\e(t)=\bigoo n$
$$
|\fwt|=\left|\fwt-\e(t)g(w)\right|+\bigoo n|g(w)|
$$
\end{lemma}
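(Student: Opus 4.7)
The plan is to apply the reverse triangle inequality, $\big||a|-|b|\big|\le|a-b|$, with the choice $a=f(w,t)$ and $b=f(w,t)-\varepsilon(t)g(w)$. Then $a-b=\varepsilon(t)g(w)$, so
\begin{equation*}
\big||f(w,t)|-|f(w,t)-\varepsilon(t)g(w)|\big|\;\le\;|\varepsilon(t)g(w)|\;=\;|\varepsilon(t)|\cdot|g(w)|.
\end{equation*}
Since $\varepsilon(t)=\bigoo n$, Definition~\ref{bigoh} gives constants $M,\delta>0$ with $|\varepsilon(t)|\le Mt^n$ for $t\in(0,\delta)$. Crucially, $M$ and $\delta$ depend only on $\varepsilon$ and not on $w$, so the bound on the difference holds pointwise in $w$ with a uniform $t$-coefficient.

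Next I would name the signed difference $E(w,t):=|f(w,t)|-|f(w,t)-\varepsilon(t)g(w)|$ and observe, from the above, that $|E(w,t)|\le Mt^n|g(w)|$ for $t\in(0,\delta)$. Writing $E(w,t)=\eta(w,t)|g(w)|$ with $|\eta(w,t)|\le Mt^n$, this exhibits $E(w,t)$ as a quantity of the form $\bigoo n\cdot|g(w)|$ in the sense used throughout the \doc. Rearranging $|f(w,t)|=|f(w,t)-\varepsilon(t)g(w)|+E(w,t)$ then yields the stated equality.

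The statement has no real obstacle: it is essentially a one-line consequence of the reverse triangle inequality combined with the definition of $\bigoo n$. The only thing worth flagging is that one should interpret the equation as shorthand for a uniform-in-$w$ bound on the discrepancy, so the argument should be phrased to make clear that the implied constant $M$ comes from $\varepsilon(t)$ alone and does not depend on $w$ or on $f$. With that caveat, no continuity assumption on $f$ or $g$ is actually used; it is only included presumably because the lemma will be applied with continuous integrands below.
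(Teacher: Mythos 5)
Your proof is correct and follows essentially the same argument as the paper's: both apply the reverse triangle inequality to bound $\big||\fwt|-|\fwt-\e(t)g(w)|\big|$ by $|\e(t)|\,|g(w)|$, then invoke the definition of $\bigoo n$ to conclude. Your explicit remark that the implied constant in $\bigoo n$ comes from $\e(t)$ alone and is uniform in $w$ is a helpful clarification but does not change the substance of the argument.
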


\begin{proof}
$$
\bigg||\fwt|-|\fwt-\eww|\bigg|\leq\bigg|\eww\bigg|=|\e(t)|\,\ww
$$
so
$$
\underbrace{-|\e(t)|}_{\displaystyle\bigoo n}\cdot\,\ww\leq|\fwt|-|\fwt-\eww|\leq\underbrace{|\e(t)|}_{\displaystyle\bigoo n}\cdot\,\,\ww
$$
so
$$
|\fwt|-|\fwt-\eww|=\bigoo n\ww
$$
so
$$
|\fwt|=|\fwt-\eww|+\bigoo n\ww
$$
\hfill\qed
\end{proof}

Applying Lemma \ref{shrink} repeatedly to $|\al4t+\be4t^2+\SS3+\SS4|$ and $\eps3jkw_j\wb_k$ says that
\begin{align*}
&J(t)=\cr
&{1+\bigoo3\over\pi^6\det A(t)}\intcf\bigg(\left|\al4t+\be4t^2+\SS4\right|+\sum_{jk}\bigoo{5\over2}|w_j\wb_k|\bigg)\,e^{-\angleenclose{w,w}}\,dw
\end{align*}
and applying Lemma \ref{shrink} repeatedly to $|\al4t+\be4t^2+\SS4|$ and\break${}^4\varepsilon_{jk\ell m}(t)w_j\wb_kw_\ell \wb_m$ says
\begin{align*}
&J(t)={1+\bigoo3\over\pi^6\det A(t)}\intcf\bigg(\left|\al4t+\be4t^2\right|+\sum_{jk}\bigoo{5\over2}|w_j\wb_k| \cr
&\hskip14em+\sum_{jk\ell m}\bigoo4|w_j\wb_kw_\ell\wb_m|\bigg)\,e^{-\angleenclose{w,w}}\,dw
\end{align*}
Finally, applying Lemma \ref{shrink} to $|\al4t+\be4t^2|$ and $\be4t^2$ says
\begin{align*}
J(t)=&{\scriptstyle1+\bigoo3\over\scriptstyle\pi^6\det A(t)}\intcf\bigg(|\al4t|+|\be4|\bigoo2+\sum_{jk}\bigoo{5\over2}|w_j\wb_k|\cr
&\hskip11em+\sum_{jk\ell m}\bigoo4|w_j\wb_kw_\ell\wb_m|\bigg)\edub \cr
=&\phantom{{}+{}}{1+\bigoo3\over\pi^6\det A(t)}\left(-t\intcf\al4\edub+\bigoo2\intcf|\be4|\edub \right) \cr &+{1+\bigoo3\over\pi^6\det A(t)}\bigg(\sum_{jk}\bigoo{5\over2}\intcf|w_j\wb_k|\edub\cr
&\hskip11em+\sum_{jk\ell m}\bigoo4\intcf|w_j\wb_kw_\ell\wb_m|\edub\bigg)\cr
=&\phantom{{}+{}}{1+\bigoo3\over\pi^6\det A(t)}\bigg(-t (-2\pi^4) +\bigoo2 \big[\hbox{finite}\big] +\sum_{jk}\bigoo{5\over2} \big[\hbox{finite}\big]\cr
&\hskip11em+\sum_{jk\ell m}\bigoo4 \big[\hbox{finite}\big] \bigg)\cr
=&\phantom{{}+{}}{1+\bigoo3\over\pi^2\det A(t)}\left(2t+\bigoo2 +\bigoo{5\over2}  +\bigoo4  \right)\cr
=&\phantom{{}+{}}{1+\bigoo3\over\pi^2\det A(t)}\bigg(2t+\bigoo2\bigg)\cr
=&\phantom{{}+{}}{2t+\bigoo2+\bigoo3+\bigoo4+\bigoo5\over\pi^2\det A(t)}\cr
=&\phantom{{}+{}}{2t+\bigoo2\over\pi^2\det A(t)}\cr
=&\phantom{{}+{}}{1\over\pi^2}{2t+\bigoo2\over3t+\bigoo2} \cr
=&\phantom{{}+{}}{2\over3\pi^2}+\bigo t
\end{align*}
so 
$$J(r)={2\over3\pi^2}+\bigo{r^2}$$  i.e.
$$
J(r)\as{r\searrow0}{30}{2\over3\pi^2}
$$

\section{Appendix}

\noindent or \it Where did $U(t)$ and $D(t)$ come from?\rm

$D(t)$ and $U(t)$ naturally arise when calculating $J(t)$ by diagonalizing $Y(t)$.
\begin{align*}
  J(t)&=\lefty\intcf\left||\h1|^2-|x_1|^2\right|\cdot\left||\h2|^2-|x_2|^2\right|\cr
&\hskip20em\cdot e^{-\angleenclose{ Y(t)(t^{-\frac52}v),(t^{-\frac52}v)}}\,dv \cr
  &=\lefty\intcf t^5\left||w_1|^2-|w_2|^2\right|\cdot t^5\left||w_3|^2-|w_4|^2\right|\cr
&\hskip20em\cdot e^{-\angleenclose{Y(t)w,w}}\,t^{20}dw \cr
  &=\sidey\intcf \left||w_1|^2-|w_2|^2\right|\cdot \left||w_3|^2-|w_4|^2\right|\cr
&\hskip20em\cdot e^{-\angleenclose{Y(t)w,w}}\,dw \cr
  &=\sidey\intcf \left||w_1|^2-|w_2|^2\right|\cdot \left||w_3|^2-|w_4|^2\right|\cr
&\hskip20em\cdot e^{-\angleenclose{U(t)^*D(t)U(t)w,w}}\,dw \cr
  &=\sidey\intcf \left||w_1|^2-|w_2|^2\right|\cdot \left||w_3|^2-|w_4|^2\right|\cr
&\hskip20em\cdot e^{-\angleenclose{D(t)U(t)w,U(t)w}}\,dw \cr
\end{align*}

where
$$
D(t)=\bracketmatrix{\lll1(t)&0&0&0\cr0&\lll2(t)&0&0\cr0&0&\lll3(t)&0\cr0&0&0&\lll4(t)}
$$
with the $\lll i(t)$ being the eigenvalues of $Y(t)$ and
\newcommand{\hor}[1]{
\leaders\hrule height3pt depth-2pt width1cm \hskip3em\ #1\ \leaders\hrule height3pt depth-2pt width1cm \hskip3em
}
$$
U(t)=\bracketmatrix{
\hor{v_1(t)}\cr
\hor{v_2(t)}\cr
\hor{v_3(t)}\cr
\hor{v_4(t)}\cr
}
$$
with the $v_i(t)$ being the associated normalized eigenvectors, making $U(t)$ real orthogonal.

So expanding the $\lll i$ and $v_i$ in $t$ gives an expansion for $J(t)$.  Maple outputs a 100 megabyte file for each eigenvalue when asked $\tt eigenvalues(Y(t))$ directly and crashes when asked to find an expansion for any individual eigenvalue.  However, because the matrix is $4\times4$, the eigenvalues and eigenvectors can be calculated algebraically by applying the quartic formula \cite{cardano} to $Y(t)$'s characteristic polynomial.  Then these algebraic expressions can be expanded by maple.

$$
Y(t)=:\bracketmatrix{\f1(t)&\f2(t)&\f3(t)&\f4(t)\cr\f5(t)&\f6(t)&\f7(t)&\f8(t)\cr\f9(t)&\f{10}(t)&\f{11}(t)&\f{12}(t)\cr\f{13}(t)&\f{14}(t)&\f{15}(t)&\f{16}(t)\cr}
$$

$ \det(Y(t)-xI)=$
$$
x^4
+\underbrace{\left(
\begin{aligned}
-\f6 \cr-\f1 \cr-\f{11}\cr-\f{16}
\end{aligned}
\right)}_{\F3(t)}x^3
+\underbrace{\left(
\begin{aligned}
-\f9  \f3 \cr +\f6  \f{16}\cr -\f{13} \f4 \cr +\f6  \f{11}\cr -\f5  \f2 \cr -\f{14} \f8 \cr +\f{11} \f{16}\cr -\f{12} \f{15}\cr -\f{10} \f7 \cr +\f1  \f6 \cr +\f1  \f{11}\cr +\f1  \f{16}
\end{aligned}
\right)}_{\F2(t)}x^2
+\underbrace{\left(
\begin{aligned}
-\f1  \f6  \f{16}\cr +\f{13} \f6  \f4 \cr -\f1  \f6  \f{11}\cr -\f5  \f{14} \f4 \cr +\f1  \f{10} \f7 \cr -\f{13} \f3  \f{12}\cr +\f{13} \f4  \f{11}\cr +\f5  \f2  \f{11}\cr -\f9  \f2  \f7 \cr +\f9  \f3  \f{16}\cr -\f9  \f4  \f{15}\cr -\f5  \f{10} \f3 \cr +\f5  \f2  \f{16}\cr -\f1  \f{11} \f{16}\cr +\f1  \f{12} \f{15}\cr -\f{13} \f2  \f8 \cr +\f9  \f6  \f3 \cr +\f1  \f{14} \f8 \cr -\f6  \f{11} \f{16}\cr +\f6  \f{12} \f{15}\cr +\f{10} \f7  \f{16}\cr -\f{10} \f8  \f{15}\cr -\f{14} \f7  \f{12}\cr +\f{14} \f8  \f{11}
\end{aligned}
\right)}_{\F1(t)}x
+\underbrace{\left(
\begin{aligned}
-\f5  \f2  \f{11} \f{16}\cr -\f5  \f2  \f{12} \f{15}\cr -\f5  \f{10} \f3  \f{16}\cr +\f5  \f{10} \f4  \f{15}\cr +\f5  \f{14} \f3  \f{12}\cr -\f5  \f{14} \f4  \f{11}\cr -\f9  \f2  \f7  \f{16}\cr +\f9  \f2  \f8  \f{15}\cr +\f9  \f6  \f3  \f{16}\cr -\f9  \f6  \f4  \f{15}\cr -\f9  \f{14} \f3  \f8 \cr +\f9  \f{14} \f4  \f7 \cr +\f{13} \f2  \f7  \f{12}\cr -\f{13} \f2  \f8  \f{11}\cr -\f{13} \f6  \f3  \f{12}\cr +\f{13} \f6  \f4  \f{11}\cr +\f{13} \f{10} \f3  \f8 \cr -\f{13} \f{10} \f4  \f7 \cr -\f1  \f6  \f{11} \f{16}\cr +\f1  \f6  \f{12} \f{15}\cr +\f1  \f{10} \f7  \f{16}\cr -\f1  \f{10} \f8  \f{15}\cr -\f1  \f{14} \f7  \f{12}\cr +\f1  \f{14} \f8  \f{11}
\end{aligned}
\right)}_{\F0(t)}
$$
Because $Y$ is hermitian, it's eigenvalues will all be real.  In fact, because $Y$ is positive definite, they will all be positive.  In the calculation below, things can be complex (e.g. $R$) but all of the imaginary parts will go away by the end.  
\begin{align*}
\F3(t)&=-8640+\bigo t \cr
\F2(t)&=51840\,{t}^{3}+\bigoo4  \cr
\F1(t)&=-17280\,{t}^{7}+\bigoo8  \cr
\F0(t)&=6480\,{t}^{12}+\bigoo{13} \cr
\end{align*}

Solving the general quartic $x^4+Bx^3+Cx^2+Dx+E=0$ requires some simplifying definitions and a few choices.
\begin{align*}
\alpha&:=-{3B^2\over8}+C=-27993600+\bigo t \qquad\negsmall\cr
\beta&:={B^3\over8}-{BC\over2}+D=-80621568000+\bigo t \qquad\negsmall\cr
\gamma&:=-{3B^4\over256}+{CB^2\over16}-{BD\over4}+E=-65303470080000+\bigo t \cr
P&:=-{\alpha^2\over12}-\gamma=-223948800\,{t}^{6}+\bigoo7 \cr
Q&:=-{\alpha^3\over108}+{\alpha\gamma\over3}-{\beta^2\over8}=-1289945088000\,{t}^{9}+\bigoo{10}  \cr
\end{align*}\begin{align*}
R&=-\frac Q2\pm\sqrt{{Q^2\over4}+{P^3\over27}}\cr&\cr&=644972544000\,{t}^{9}+\bigoo{10}\pm\underbrace{\sqrt{-8666449635704832000000\,{t}^{20}+\bigoo{21} }}_{i\bigoo{10}}
\cr &\hbox{(Choose either root.)} \cr
\end{align*}\begin{align*}
U&=\root3\of R&\hbox{(Choose any of the three roots.)} \cr
y&:=-\frac56\alpha+U-{P\over3U} \cr
W&:=\sqrt{\alpha+2y} &{\hbox{(Both roots come up in $\pm_s$)}}\cr
\end{align*}
With those definitions, the roots should be
$$
-\frac B4+{\pm_s W\pm_t\sqrt{-\left(3\alpha+2y\pm_s{2\beta\over W}\right)}\over2}
$$
where the $\pm_s$'s are dependent and the $\pm_t$ is independent.
\rbreak
Maple can expand $\alpha$, $\beta$, and $B$ easily.  We need to carefully intervene to get it to expand $W$, $y$, and ${2\beta\over W}$ which require $R$ and $U$.
\rbreak
\begin{align*}
&\kern-1em-{Q^2\over4}-{P^3\over27}=\cr
&{\scriptstyle8666449635704832000000}\,{t}^{20}\!+\cdots+\!{\frac {\scriptscriptstyle563370492281772061551028318944792605312337}{7662929083743683750}}{t}^{40}\!+\!\bigoo{41}
\end{align*}
\rbreak
\begin{align*}
   \sqrt{-{Q^2\over4}-{P^3\over27}}=&\sqrt{\aaa t^{20}+\dots+\bigoo{41}}\cr
 =&\sqrt{\aaa}\,t^{10}\cdot\sqrt{1+\dots+\bigoo{21}}  \underx\cr
 =&\sqrt{\aaa}\,t^{10}\cdot\left(1+\frac12x+\dots+\bigo{x^{21}}\right)\oversqrtx\cr
 =&\sqrt{\aaa}\,t^{10}+\dots+\bigoo{31}
\end{align*}
\rbreak
$$
-{Q\over2}=644972544000\,{t}^{9}+\dots+\bigoo{30}
$$
\rbreak
\begin{align*}
&{\sqrt{-{Q^2\over4}-{P^3\over27}}\over-\frac Q2}=\cr
&{\sqrt\aaa\,t^{10}+\dots+\bigoo{31}\over\bbb\,t^9+\dots+\bigoo{30}}\cr
=&{1\over{\scriptstyle\bbb}\,t^9}{1\over1+\dots+\bigoo{21}\underx}\cr
&\hskip11em\cdot\big(\sqrt{\scriptstyle\aaa}\,t^{10}+\dots+\bigoo{31}\big) \cr
\cr
=&{1\over{\scriptstyle\bbb}\,t^9}\big(1-x+\dots+\bigo{x^{21}}\big)\overgeo\cr
&\hskip11em\cdot\big(\sqrt{\scriptstyle\aaa}\,t^{10}+\dots+\bigoo{31}\big)\cr
=&{1\over\bbb\,t^9}\big(1+\dots+\bigoo{21}\big)\cr
&\hskip11em\cdot\big(\sqrt{\scriptstyle\aaa}\,t^{10}+\dots+\bigoo{31}\big)\cr
=&{1\over\bbb\,t^9}\big(\sqrt\aaa\,t^{10}+\dots+\bigoo{31}\big)\cr
=&{\sqrt3\over12}\,t+\dots+\bigoo{22}
\end{align*}
\rbreak
Choose $R$ in the I quadrant, namely
\begin{align*}
R:=&-{Q\over2}+i\sqrt{-{Q^2\over4}-{P^3\over27}}\cr
  =&\bbb\,t^9+\dots+\bigoo{30}\cr
&+i\big(\sqrt\aaa\,t^{10}+\dots+\bigoo{31}\big) \cr
  =&\rr e^{i\trr}
\end{align*}
where
$$
\rr=\sqrt{{-P^3\over27}}\quad\hbox{and}\quad\trr=\arctan\left(\sqrt{-{Q^2\over4}-{P^3\over27}}\over-{Q\over2}\right)
$$
\rbreak
Choose $U$ in the I quadrant, namely
$$
U:=\root3\of R=\root3\of{\rr}e^{i\frac13\trr}=\ru e^{i\tu}
$$
where
$$
\tu=\frac13\arctan\left({\sqrt{-{Q^2\over4}-{P^3\over27}}\over-{Q\over2}}\right)\quad\hbox{and}\quad\ru=\left({(-P)^3\over27}\right)^{\frac16}={\sqrt{-P}\over\root6\of{27}}
$$
\rbreak
\begin{align*}
y&=-\frac56\alpha+U-{P\over3U}=-\frac56\alpha+\ru e^{i\tu}-{P\over3\ru e^{i\tu}}\cr
&=-\frac56\alpha+\ru e^{i\tu}-{Pe^{-i\tu}\over3\ru}\cr
&=-\frac56\alpha+{\ru^2e^{i\tu}-{P\over3}e^{-i\tu}\over\ru}\cr
&=-\frac56\alpha+{\ru^2(\cos\tu+i\sin\tu)-{P\over3}(\cos\tu-i\sin\tu)\over\ru}\cr
&=-\frac56\alpha+{\left(\ru^2-\frac P3\right)\over\ru}\cos\tu+i\left[{\left(\ru^2+\frac P3\right)\over\ru}\right]\sin\tu\cr
&=-\frac56\alpha+\left[{\left({\sqrt{-P}\over\root6\of{27}}\right)^2-\frac P3\over\ru}\right]\cos\tu+i\left[{\left({\sqrt{-P}\over\root6\of{27}}\right)^2+\frac P3\over\ru}\right]\sin\tu\cr
&=
-\frac56\alpha+\left({{-P\over3}-\frac P3\over\ru}\right)\cos\tu+i\left({{-P\over3}+\frac P3\over\ru}\right)\sin\tu\cr
&=-\frac56-{2P\over3\ru}\cos\tu+i\begin{pmatrix}\cr&0&\cr\cr\end{pmatrix}\sin\tu\cr
&=-\frac56\alpha-{2P\over3\left({\sqrt{-P}\over\root6\of{27}}\right)}\cos\tu\cr
&=-\frac56\alpha+{2\over\sqrt3}\sqrt{-P}\cos\tu\cr
&=-\frac56\alpha+{2\over\sqrt3}\sqrt{-P}\cos\left(\frac13\arctan\left({\sqrt{-{Q^2\over4}-{P^3\over27}}\over-{Q\over2}}\right)\right)
\end{align*}
\rbreak
\begin{align*}
\arctan\left({\sqrt{-{Q^2\over4}-{P^3\over27}}\over-{Q\over2}}\right)&=\arctan\left({\sqrt3\over12}t+\dots+\bigoo{22}\right)\underxspec{9.2}{2}{8}{1}\cr
&=\left(x-\frac13x^2+\dots+\bigo{x^{22}}\right)\underarctan{11}2{10}{.9} \cr
&={\sqrt{3}\over12}t+\dots+\bigoo{22}
\end{align*}
so
$$
\frac13\arctan\left({\sqrt{-{Q^2\over4}-{P^3\over27}}\over-{Q\over2}}\right)={\sqrt3\over36}t+\dots+\bigoo{22}
$$
\rbreak
\begin{align*}
\cos\frac13&\arctan\left({\sqrt{-{Q^2\over4}-{P^3\over27}}\over-{Q\over2}}\right)=\cos\left({\sqrt3\over36}t+\dots+\bigoo{22}\right)\underxspec{9.2}281\cr
&=1+\dots+\bigo{x^{22}}\undercos{7.5}1{7.2}{.2}=1+\dots+\bigoo{22}
\end{align*}
\rbreak
\begin{align*}
y&=-\frac56\alpha+{2\over\sqrt3}\sqrt{-P}\cos\frac13\arctan\left({\sqrt{-{Q^2\over4}-{P^3\over27}}\over-{Q\over2}}\right) \cr
=&-\frac56\left(-\ddd+\dots+\bigoo{25}\right)\cr
&\hskip9em+{2\over\sqrt3}\left(\ccc t^3+...\bigoo{25}\right)\left(1+\dots+\bigoo{22}\right) \cr
=&{5\cdot\ddd\over6}+\dots+\bigoo{25}+{2\cdot\ccc\over\sqrt3}\,t^3+\dots+\bigoo{25}\cr
=&{5\cdot\ddd\over6}+\dots+\bigoo{25}
\end{align*}
\rbreak
\begin{align*}
W&=\sqrt{\alpha+2y}\cr
&=\sqrt{(-\ddd+\dots+\bigoo{25})+2\left({5\cdot\ddd\over6}+\dots+\bigoo{25}\right)} \cr
=&\sqrt{\frac23\cdot\ddd+\dots+\bigoo{25}}=\sqrt{\frac23\cdot\ddd}\sqrt{1+\dots+\bigoo{25}}\underx \cr
=&\sqrt{\frac23\cdot\ddd}\underbrace{\left(1+\dots+\bigo{x^{25}}\right)}_{\sqrt{1+x}}=\sqrt{\frac23\cdot\ddd}\left(1+\dots+\bigoo{25}\right) \cr
&=\sqrt{\frac23\cdot\ddd}+\dots+\bigoo{25}=4320+\dots+\bigoo{25}\cr
\end{align*}
\rbreak
\begin{align*}
{2\beta\over W}&={-{\scriptstyle\eee}+\dots+\bigoo{25}\over4320+\dots+\bigoo{25}}\cr
&=(-{\scriptstyle\eee}+\dots+\bigoo{25})\cdot{1\over4320}\cdot{1\over1+\dots+\bigoo{25}}\underxspec{5.5}2{5.5}1 \cr
\cr
&={1\over4320}(-\eee+\dots+\bigoo{25})(1+\dots+\bigo{x^{25}})\overgeospec{-8}{2.5}{4.6}{1.6}{3.35}5{2.2}\cr
&=-{\eee\over4320}+\dots+\bigoo{25}
=-37324800+\dots+\bigoo{25}
\end{align*}
\rbreak
So
\begin{align*}
\lll1&:=\lambda_{+,+}=-{B\over4}+{W+\sqrt{-(3\alpha+2y+{2\beta\over W})}\over2}={-B+2\sqrt{-3\alpha-2y-{2\beta\over W}}\over4} \cr
&=\frac14\left(
{\begin{aligned}&-({-8640+\dots+\bigoo{25}})+{2W}\cr
&+2\sqrt{\begin{aligned}&\scriptstyle-3({\scriptscriptstyle\!-\!\ddd+\dots+\bigoo{25}})\cr
&-2({\scriptscriptstyle{5\cdot\ddd\over6}+\dots+\bigoo{25}})-({\scriptscriptstyle{-\eee\over4320}+\dots+\bigoo{25}})\end{aligned}}\end{aligned}}
\right) \cr
&=\frac14\left({\begin{aligned}&({8640+\dots+\bigoo{25}})+2({4320+\dots+\bigoo{25}})\cr
&+2\sqrt{74649600+\dots+\bigoo{25}}\end{aligned}}\right) \cr
&={17280+\dots+\bigoo{25}+2\cdot8640\sqrt{1+\dots+\bigoo{25}}\over4}\underxspec{5.5}0{5.5}1 \cr
&={17280+\dots+\bigoo{25}+17280(1+\dots+\bigo{x^{25}})\over4}\undersqrt8080\cr
&=8640+\dots+\bigoo{25}
\end{align*}
\rbreak
\begin{align*}
\lll2&:=\lambda_{+,-}={-B+2W-2\sqrt{-3\alpha-2y-{2\beta\over W}}\over4}\cr
&={17280+\dots+\bigoo{25}-17280(1+\dots+\bigoo{25})\over4} \cr
&=6t^3+\dots+\bigoo{25}
\end{align*}
\rbreak
\begin{align*}
\lll3&:=\lambda_{-,+}={-B-2W+2\sqrt{-3\alpha-2y+{2\beta\over W}}\over4}\cr
&={-B-2W+2\sqrt{\frac19t^8+\dots+\bigoo{25}}\over4} \cr
&={-B-2W+\frac23t^4\sqrt{1+\dots+\bigoo{17}}\over4}\underxspec6060\cr
&=\frac14\left[{-B-2W+\frac23t^4\underbrace{(1+\dots+\bigo{x^{17}})}_{\sqrt{1+x}}}\right] \cr
&={-B-2W+\frac23t^4+\dots+\bigoo{21}\over4}=\frac13t^4+\dots+\bigoo{21}
\end{align*}
\rbreak
\begin{align*}
\lll4&:=\lambda_{-,-}={-B-2W-\sqrt{-3\alpha-2y+{2\beta\over W}}\over4}={\scriptstyle-B-2W-\frac23t^4(1+\dots+\bigoo{17})\over4}\cr
&={3\over8}t^5+\dots+\bigoo{21}
\end{align*}
\rbreak
We can check that these expansions for the eigenvalues are correct by checking that evaluating the elementary symmetric polynomials in four variables on them gives the coefficients of the characteristic polynomial.  i.e.
\begin{align*}
&\hbox{characteristic polynomial}=\cr
&x^4-\underbrace{\elem1}_{\tr}x^3+\elem2x^2\cr
&\hskip11em-\elem3x+\underbrace{\elem4}_{\det}
\end{align*}
Maple confirms that
\begin{align*}
\elem1+B&=\bigoo{21}\cr
\elem2-C&=\bigoo{21}\cr
\elem3+D&=\bigoo{24}\cr
\elem4-E&=\bigoo{28}\cr
\end{align*}
\rbreak
To find the associated eigenvectors, we need to find the kernel of $A:=Y-\lll jI$.  i.e. we want to find $v_j$ such that $Av_j=0$.  If the last row of $A$ isn't a row of zeroes, which is the case for the four $A$'s that we examine here, then a sequence of elementary row operations represented by multiplication by an invertible $J$ can take $A$ to a matrix with a row of 0's on the bottom.
$$
JA=\bracketmatrix{&&&a\cr&B&&b\cr&&&c\cr0&0&0&0}
$$
then
\begin{align*}
&JA\bracketmatrix{-B^{-1}\bracketmatrix{a\cr b\cr c}\cr1}=\bracketmatrix{&&&a\cr&B&&b\cr&&&c\cr0&0&0&0}\bracketmatrix{-B^{-1}\bracketmatrix{a\cr b\cr c}\cr1}\cr
&=\bracketmatrix{-BB^{-1}\bracketmatrix{a\cr b\cr c}+\bracketmatrix{a\cr b\cr c}\cr0}=\bracketmatrix{0\cr0\cr0\cr0} \cr
\implies&A\bracketmatrix{-B^{-1}\bracketmatrix{a\cr b\cr c}\cr1}=J^{-1}\bracketmatrix{0\cr0\cr0\cr0}=\bracketmatrix{0\cr0\cr0\cr0}\cr
\end{align*}
so $\bracketmatrix{-B^{-1}\bracketmatrix{a\cr b\cr c}\cr1}$ is an unnormalized eigenvector.  We need $-B^{-1}\bracketmatrix{a\cr b\cr c}$ for each $A$.  To expand $B^{-1}$ we just expand $\hbox{adj}\,B$ and $\det B$ and divide to get $B^{-1}={1\over\det B}\hbox{adj}\,B$.

\rbreak
\subsection{$v_1$:}
\begin{align*}
&A=\cr
&\bracketmatrix{
-8640+ \dots +O \left( {t}^{25} \right) &\scriptstyle 360\,t+ \dots +O \left( {t}^{25} \right) &\scriptstyle -30\,{t}^{2}+ \dots +O \left( {t}^{25} \right) &\scriptstyle-360\,t+ \dots +O \left( {t}^{25} \right) \cr \noalign{\medskip}360\,t+ \dots +O \left( {t}^{25} \right) &\scriptstyle -4320+ \dots +O \left( {t}^{25} \right) &\scriptstyle -360\,t+ \dots +O \left( {t}^{25} \right) &\scriptstyle-4320+ \dots +O \left( {t}^{25} \right) \cr \noalign{\medskip}-30\,{t}^{2}+ \dots +O \left( {t}^{25} \right) &\scriptstyle -360\,t+ \dots +O \left( {t}^{25} \right) &\scriptstyle -8640+ \dots +O \left( {t}^{25} \right) &\scriptstyle360\,t+ \dots +O \left( {t}^{25} \right) \cr \noalign{\medskip}-360\,t+ \dots +O \left( {t}^{25} \right) &\scriptstyle -4320+ \dots +O \left( {t}^{25} \right) &\scriptstyle 360\,t+ \dots +O \left( {t}^{25} \right) &\scriptstyle-4320+ \dots +O \left( {t}^{25} \right)
}
\end{align*}
We get $v_1=$\hfill
$$
\bracketmatrix{
-{\sqrt2\over24}t-{\sqrt2\over48}{t}^{2}+\cdots +\bigoo{25} \cr -{\sqrt2\over38}-{\sqrt2\over8}t+\cdots +\bigoo{25} \cr {\sqrt2\over24}t+0\,t^2+\cdots +\bigoo{25} \cr {\sqrt2\over2}-{\sqrt2\over8}t+\cdots +\bigoo{25}
}
$$
Maple confirms that 
$$
Yv_1-\lll1v_1=\bracketmatrix{\bigoo{25}\cr \bigoo{25}\cr \bigoo{25}\cr \bigoo{25}\cr }
$$
\rbreak
\subsection{$v_2$:}
$$
A=\bracketmatrix{
\scriptstyle30\,{t}^{2}+ \dots +O \left( {t}^{25} \right) &\scriptstyle 360\,t+ \dots +O \left( {t}^{25} \right) &\scriptstyle -30\,{t}^{2}+ \dots +O \left( {t}^{25} \right) &\scriptstyle-360\,t+ \dots +O \left( {t}^{25} \right) \cr \noalign{\medskip}\scriptstyle360\,t+ \dots +O \left( {t}^{25} \right) &\scriptstyle 4320+ \dots +O \left( {t}^{25} \right) &\scriptstyle -360\,t+ \dots +O \left( {t}^{25} \right) &\scriptstyle-4320+ \dots +O \left( {t}^{25} \right) \cr \noalign{\medskip}\scriptstyle-30\,{t}^{2}+ \dots +O \left( {t}^{25} \right) &\scriptstyle -360\,t+ \dots +O \left( {t}^{25} \right) &\scriptstyle 30\,{t}^{2}+ \dots +O \left( {t}^{25} \right) &\scriptstyle360\,t+ \dots +O \left( {t}^{25} \right) \cr \noalign{\medskip}\scriptstyle-360\,t+ \dots +O \left( {t}^{25} \right) &\scriptstyle -4320+ \dots +O \left( {t}^{25} \right) &\scriptstyle 360\,t+ \dots +O \left( {t}^{25} \right) &\scriptstyle4320+ \dots +O \left( {t}^{25} \right)
}
$$
We get $v_2=$\hfill
$$
\bracketmatrix{
-\frac12-{3\over16}\,t+\cdots +\bigoo{18}\cr \frac12-{3\over16}t+\cdots +\bigoo{18}\cr -\frac12+{1\over16}t+\cdots +\bigoo{18}\cr \frac12+{1\over16}t+\cdots +\bigoo{18}
}
$$
again, maple confirms that
$$
Yv_2-\lll2v_2=\bracketmatrix{\bigoo{18}\cr \bigoo{18}\cr \bigoo{18}\cr \bigoo{18}\cr }
$$
\rbreak
\subsection{$v_3$:}
$$
A=\bracketmatrix{
\scriptstyle30\,{t}^{2}+ \dots +O \left( {t}^{21} \right) &\scriptstyle360\,t+ \dots +O \left( {t}^{21} \right) &\scriptstyle-30\,{t}^{2}+ \dots +O \left( {t}^{21} \right) &\scriptstyle-360\,t+ \dots +O \left( {t}^{21} \right) \cr \noalign{\medskip}\scriptstyle360\,t+ \dots +O \left( {t}^{21} \right) &\scriptstyle4320+ \dots +O \left( {t}^{21} \right) &\scriptstyle-360\,t+ \dots +O \left( {t}^{21} \right) &\scriptstyle-4320+ \dots +O \left( {t}^{21} \right) \cr \noalign{\medskip}\scriptstyle-30\,{t}^{2}+ \dots +O \left( {t}^{21} \right) &\scriptstyle-360\,t+ \dots +O \left( {t}^{21} \right) &\scriptstyle30\,{t}^{2}+ \dots +O \left( {t}^{21} \right) &\scriptstyle360\,t+ \dots +O \left( {t}^{21} \right) \cr \noalign{\medskip}\scriptstyle-360\,t+ \dots +O \left( {t}^{21} \right) &\scriptstyle-4320+ \dots +O \left( {t}^{21} \right) &\scriptstyle360\,t+ \dots +O \left( {t}^{21} \right) &\scriptstyle4320+ \dots +O \left( {t}^{21} \right)
}
$$
In this case, the unnormalized eigenvector would have negative powers of $t$, so in the maple calculations, we find the normalized version of $tv$ which is the same.  i.e. we use the fact that $${t\tilde v\over\|t\tilde v\|}={\tilde v\over \|\tilde v\|}$$  We get $v_3=$\hfill
$$
\bracketmatrix{
{\sqrt2\over2}+0\,t+\cdots +\bigoo9\cr {\sqrt2\over12}t+{\frac {61\sqrt2}{576}}{t}^{2}+\cdots +\bigoo9 \cr -{\sqrt2\over2}+0\,t+\cdots +\bigoo9 \cr {\sqrt2\over6}t+{\frac {109\sqrt2}{576}}{t}^{2}+\cdots +\bigoo9
}
$$
again maple confirms that
$$
Yv_3-\lll3v_3=\bracketmatrix{\bigoo9\cr \bigoo9\cr \bigoo9\cr \bigoo9\cr }
$$
\rbreak
\subsection{$v_4$:}
$$
A=\bracketmatrix{
\scriptstyle30\,{t}^{2}+ \dots +O \left( {t}^{21} \right) &\scriptstyle360\,t+ \dots +O \left( {t}^{21} \right) &\scriptstyle-30\,{t}^{2}+ \dots +O \left( {t}^{21} \right) &\scriptstyle-360\,t+ \dots +O \left( {t}^{21} \right) \cr \noalign{\medskip}\scriptstyle360\,t+ \dots +O \left( {t}^{21} \right) &\scriptstyle4320+ \dots +O \left( {t}^{21} \right) &\scriptstyle-360\,t+ \dots +O \left( {t}^{21} \right) &\scriptstyle-4320+ \dots +O \left( {t}^{21} \right) \cr \noalign{\medskip}\scriptstyle-30\,{t}^{2}+ \dots +O \left( {t}^{21} \right) &\scriptstyle-360\,t+ \dots +O \left( {t}^{21} \right) &\scriptstyle30\,{t}^{2}+ \dots +O \left( {t}^{21} \right) &\scriptstyle360\,t+ \dots +O \left( {t}^{21} \right) \cr \noalign{\medskip}\scriptstyle-360\,t+ \dots +O \left( {t}^{21} \right) &\scriptstyle-4320+ \dots +O \left( {t}^{21} \right) &\scriptstyle360\,t+ \dots +O \left( {t}^{21} \right) &\scriptstyle4320+ \dots +O \left( {t}^{21} \right)
}
$$
We get $v_4=$\hfill
$$
\bracketmatrix{
\frac12-{3\over16}t+\cdots+\bigoo{12}\cr \frac12-{1\over16}t+\cdots+\bigoo{12}\cr \frac12+{1\over16}t+\cdots+\bigoo{12}\cr \frac12+{3\over16}t+\cdots+\bigoo{12}
}
$$
maple confirms that
$$
Yv_4-\lll4v_4=\bracketmatrix{\bigoo{12}\cr \bigoo{12}\cr \bigoo{12}\cr \bigoo{12}\cr }
$$
\rbreak
All collected together, we have
\begin{align*}
  \lll1&=8640+\dots+\bigoo{25}\cr
  \lll2&=6t^3+\dots+\bigoo{25}\cr
  \lll3&=\frac13t^4+\dots+\bigoo{21}\cr
  \lll4&=\frac38t^5+\dots+\bigoo{21}\cr
\end{align*}
and $U=[U_{ij}]$ where
\begin{align*}
U_{11}&=-{\sqrt2\over24}t-{\sqrt2\over48}{t}^{2}+\cdots+\bigoo{25} &U_{12}&= -{\sqrt2\over2}-{\sqrt2\over8}t+\cdots+\bigoo{25}\cr
U_{21}&=-{\frac {1}{2}}-{\frac {3}{16}}t+\cdots+\bigoo{18} &U_{22}&= {\frac {1}{2}}-{\frac {3}{16}}t+\cdots+\bigoo{18} \cr
U_{31}&={\sqrt2\over2}+0\,t+\cdots+\bigoo9 &U_{32}&= {\sqrt2\over12}t+{61\sqrt2\over576}t^2+\cdots+\bigoo9\cr
U_{41}&={\frac {1}{2}}-{\frac {3}{16}}t+\cdots+\bigoo{12} &U_{42}&= {\frac {1}{2}}-{\frac {1}{16}}t+\cdots+\bigoo{12}
\end{align*}
\begin{align*}
U_{13}&= {\sqrt2\over24}t+0\,t^2+\cdots+\bigoo{25}&U_{14}&= {\sqrt2\over2}-{\sqrt2\over8}t+\cdots+\bigoo{25}\cr
U_{23}&= -{\frac {1}{2}}+{\frac {1}{16}}t+\cdots+\bigoo{18} &U_{24}&= {\frac {1}{2}}+{\frac {1}{16}}t+\cdots+\bigoo{18}\cr
U_{33}&= -{\sqrt2\over2}+0\,t+\cdots+\bigoo9&U_{34}&= {\sqrt2\over6}t+{109\sqrt2\over576}t^2+\cdots+\bigoo9\cr
U_{43}&= {\frac {1}{2}}+{\frac {1}{16}}t+\cdots+\bigoo{12}&U_{44}&= {\frac {1}{2}}+{\frac {3}{16}}t+\cdots+\bigoo{12}
\end{align*}
The calculation only requires the $\lambda_i$ be expanded to $t^{12}$ and $U$ be expanded to $t^3$.

\begin{acknowledgements}
I would like to thank my advisor Bernard Shiffman for his advice, patience, encouragement, and many ideas about solving this problem.
\end{acknowledgements}

\bibliographystyle{spmpsci}      
\bibliography{paper}   

\begin{thebibliography}{10}
\providecommand{\url}[1]{{#1}}
\providecommand{\urlprefix}{URL }
\expandafter\ifx\csname urlstyle\endcsname\relax
  \providecommand{\doi}[1]{DOI~\discretionary{}{}{}#1}\else
  \providecommand{\doi}{DOI~\discretionary{}{}{}\begingroup
  \urlstyle{rm}\Url}\fi

\bibitem{arxiv_version}
Baber, J.: Scaled correlations of critical points of random sections on riemann
  surfaces (2012).
\newblock \urlprefix\url{http://arxiv.org/abs/1106.4737}

\bibitem{pl}
Bleher, P., Shiffman, B., Zelditch, S.: Poincar\'e-{L}elong approach to
  universality and scaling of correlations between zeros.
\newblock Comm. Math. Phys. \textbf{208}(3), 771--785 (2000).
\newblock \doi{10.1007/s002200050010}.
\newblock \urlprefix\url{http://dx.doi.org/10.1007/s002200050010}

\bibitem{bsz}
Bleher, P., Shiffman, B., Zelditch, S.: Universality and scaling of
  correlations between zeros on complex manifolds.
\newblock Invent. Math. \textbf{142}(2), 351--395 (2000).
\newblock \doi{10.1007/s002220000092}.
\newblock \urlprefix\url{http://dx.doi.org/10.1007/s002220000092}

\bibitem{symplectic}
Bleher, P., Shiffman, B., Zelditch, S.: Universality and scaling of zeros on
  symplectic manifolds.
\newblock In: Random matrix models and their applications, \emph{Math. Sci.
  Res. Inst. Publ.}, vol.~40, pp. 31--69. Cambridge Univ. Press, Cambridge
  (2001)

\bibitem{bloom}
Bloom, T.: Random polynomials and {G}reen functions.
\newblock Int. Math. Res. Not. (28), 1689--1708 (2005).
\newblock \doi{10.1155/IMRN.2005.1689}.
\newblock \urlprefix\url{http://dx.doi.org/10.1155/IMRN.2005.1689}

\bibitem{bs}
Bloom, T., Shiffman, B.: Zeros of random polynomials on {$\Bbb C^m$}.
\newblock Math. Res. Lett. \textbf{14}(3), 469--479 (2007)

\bibitem{cardano}
Cardano, G.: Ars magna or {T}he rules of algebra.
\newblock Dover Publications Inc., New York (1993).
\newblock Translated from the Latin and edited by T. Richard Witmer, With a
  foreword by Oystein Ore, Reprint of the 1968 edition

\bibitem{cpsv1}
Douglas, M.R., Shiffman, B., Zelditch, S.: Critical points and supersymmetric
  vacua. {I}.
\newblock Comm. Math. Phys. \textbf{252}(1-3), 325--358 (2004).
\newblock \doi{10.1007/s00220-004-1228-y}.
\newblock \urlprefix\url{http://dx.doi.org/10.1007/s00220-004-1228-y}

\bibitem{cpsv2}
Douglas, M.R., Shiffman, B., Zelditch, S.: Critical points and supersymmetric
  vacua. {II}. {A}symptotics and extremal metrics.
\newblock J. Differential Geom. \textbf{72}(3), 381--427 (2006).
\newblock
  \urlprefix\url{http://projecteuclid.org/getRecord?id=euclid.jdg/1143593745}

\bibitem{cpsv3}
Douglas, M.R., Shiffman, B., Zelditch, S.: Critical points and supersymmetric
  vacua. {III}. {S}tring/{M} models.
\newblock Comm. Math. Phys. \textbf{265}(3), 617--671 (2006).
\newblock \doi{10.1007/s00220-006-0003-7}.
\newblock \urlprefix\url{http://dx.doi.org/10.1007/s00220-006-0003-7}

\bibitem{gh}
Griffiths, P., Harris, J.: Principles of algebraic geometry.
\newblock Wiley Classics Library. John Wiley \& Sons Inc., New York (1994).
\newblock Reprint of the 1978 original

\bibitem{hammersley}
Hammersley, J.M.: The zeros of a random polynomial.
\newblock In: Proceedings of the {T}hird {B}erkeley {S}ymposium on
  {M}athematical {S}tatistics and {P}robability, 1954--1955, vol. {II}, pp.
  89--111. University of California Press, Berkeley and Los Angeles (1956)

\bibitem{hannay}
Hannay, J.H.: Chaotic analytic zero points: exact statistics for those of a
  random spin state.
\newblock J. Phys. A \textbf{29}(5), L101--L105 (1996).
\newblock \doi{10.1088/0305-4470/29/5/004}.
\newblock \urlprefix\url{http://dx.doi.org/10.1088/0305-4470/29/5/004}

\bibitem{kac}
Kac, M.: On the average number of real roots of a random algebraic equation.
  {II}.
\newblock Proc. London Math. Soc. (2) \textbf{50}, 390--408 (1949)

\bibitem{riceold}
Rice, S.O.: The {D}istribution of the {M}axima of a {R}andom {C}urve.
\newblock Amer. J. Math. \textbf{61}(2), 409--416 (1939).
\newblock \doi{10.2307/2371510}.
\newblock \urlprefix\url{http://dx.doi.org/10.2307/2371510}

\bibitem{rice}
Rice, S.O.: Mathematical analysis of random noise.
\newblock Bell System Tech. J. \textbf{23}, 282--332 (1944)

\bibitem{numbervariance}
Shiffman, B., Zelditch, S.: Number variance of random zeros on complex
  manifolds.
\newblock Geom. Funct. Anal. \textbf{18}(4), 1422--1475 (2008).
\newblock \doi{10.1007/s00039-008-0686-3}.
\newblock \urlprefix\url{http://dx.doi.org/10.1007/s00039-008-0686-3}

\bibitem{sodin4}
Sodin, M.: Zeroes of {G}aussian analytic functions.
\newblock In: European {C}ongress of {M}athematics, pp. 445--458. Eur. Math.
  Soc., Z\"urich (2005)

\bibitem{sodin1}
Sodin, M., Tsirelson, B.: Random complex zeroes. {I}. {A}symptotic normality.
\newblock Israel J. Math. \textbf{144}, 125--149 (2004).
\newblock \doi{10.1007/BF02984409}.
\newblock \urlprefix\url{http://dx.doi.org/10.1007/BF02984409}

\bibitem{sodin3}
Sodin, M., Tsirelson, B.: Random complex zeroes. {III}. {D}ecay of the hole
  probability.
\newblock Israel J. Math. \textbf{147}, 371--379 (2005).
\newblock \doi{10.1007/BF02785373}.
\newblock \urlprefix\url{http://dx.doi.org/10.1007/BF02785373}

\bibitem{sodin2}
Sodin, M., Tsirelson, B.: Random complex zeroes. {II}. {P}erturbed lattice.
\newblock Israel J. Math. \textbf{152}, 105--124 (2006).
\newblock \doi{10.1007/BF02771978}.
\newblock \urlprefix\url{http://dx.doi.org/10.1007/BF02771978}

\end{thebibliography}

\end{document}